\theoremstyle{plain}
\newtheorem{theorem}{Theorem}[section]
\newtheorem{corollary}[theorem]{Corollary}
\newtheorem{proposition}[theorem]{Proposition}
\newtheorem{lemma}[theorem]{Lemma}
\theoremstyle{definition}
\newtheorem{definition}[theorem]{Definition}
\newtheorem{example}[theorem]{Example}
\theoremstyle{remark}
\newtheorem{remark}[theorem]{Remark}
\numberwithin{equation}{section}\theoremstyle{plain}
\newcommand{\I}{\mathcal{I}}
\renewcommand{\1}{\textbf{1}}
\newcommand{\B}{{\mathcal B}}
\newcommand{\C}{{\mathcal C}}
\newcommand{\D}{{\mathcal D}}
\newcommand{\Z}{{\mathcal Z}}
\newcommand{\Zz}{{\mathbb Z}}
\newcommand{\M}{\mathcal{M}}
\newcommand{\E}{{\mathcal E}}
\newcommand{\Rep}{\operatorname{Rep}}
\newcommand{\cd}{\mathrm{cd}}
\newcommand{\KER}{\mathfrak{Ker}}
\newcommand\Aut{\operatorname{Aut}}
\newcommand\Irr{\operatorname{Irr}}
\newcommand\FPdim{\operatorname{FPdim}}
\newcommand\vect{\operatorname{Vect}}
\newcommand\svect{\operatorname{sVect}}
\newcommand\id{\operatorname{id}}
\newcommand\Hom{\operatorname{Hom}}
\begin{document}
\title[The core a weakly group-theoretical braided fusion category]{The core a weakly group-theoretical braided fusion category}
\author{Sonia Natale}
\address{Facultad de Matem\'atica, Astronom\'\i a y F\'\i sica.
Universidad Nacional de C\'ordoba. CIEM -- CONICET. Ciudad
Universitaria. (5000) C\'ordoba, Argentina}
\email{natale@famaf.unc.edu.ar
\newline \indent \emph{URL:}\/ http://www.famaf.unc.edu.ar/$\sim$natale}
	
\thanks{This work was partially supported by  CONICET and SeCYT--UNC}
	
\keywords{Braided fusion category;  braided $G$-crossed fusion category; Tannakian category; core of a fusion category; weakly anisotropic braided fusion category; weakly group-theoretical fusion category}
	
\subjclass[2010]{18D10; 16T05}
	
\date{\today}
	
\begin{abstract} We show that the core of a weakly group-theoretical braided fusion category $\C$ is  equivalent as a braided fusion category to a tensor product  $\B \boxtimes \D$, where $\D$ is a pointed weakly anisotropic braided fusion category, and  $\B \cong \vect$ or $\B$ is an Ising braided category. In particular, if $\C$ is integral, then its core is a pointed weakly anisotropic braided fusion category.
As an application we give a characterization of the solvability of a weakly group-theoretical braided fusion category. We also prove that an integral modular category all of whose simple objects have Frobenius-Perron dimension at most 2 is necessarily group-theoretical.
\end{abstract}
	
\maketitle
	
\section{Introduction}

A braided fusion category is a fusion category endowed with a braiding, that is, a natural isomorphism $c_{X,Y} : X \otimes Y \rightarrow Y \otimes X$, $X, Y \in \C$, subject to the so-called hexagon axioms.  Braided fusion categories are of interest in many areas of mathematics and mathematical physics.

\medbreak
A generalization of the notion of a braided category is provided by that of a \emph{crossed braided category}, introduced by Turaev \cite{turaev-hqft}.
Let $G$ be a finite group. A \emph{braided $G$-crossed fusion
category} is a fusion category $\D$ endowed with a $G$-grading $\D
= \oplus_{g \in G}\D_g$ and an action of $G$ by tensor autoequivalences
$\rho:\underline G \to \Aut_{\otimes} \, \D$, such that $\rho^g(\D_h)
\subseteq \D_{ghg^{-1}}$, for all $g, h \in G$, and a $G$-braiding $c: X \otimes Y \to
\rho^g(Y) \otimes X$, $g \in G$, $X \in \D_g$, $Y \in \D$, subject to appropriate
compatibility conditions.

\medbreak
A braided fusion category is called \emph{Tannakian}, if it is equivalent as a braided fusion category to the category $\Rep G$ of finite dimensional representations of some finite group $G$, where the braiding is given by the usual flip of vector spaces.

The structure of braided fusion categories containing a Tannakian subcategory $\E \cong \Rep(G)$ can be described in terms of equivariantizations
of $G$-crossed braided fusion categories; see  \cite{mueger-crossed}, \cite{kirillov}, \cite[Section 4.4]{DGNOI}.

\medbreak
The \emph{core} of a braided fusion category was introduced in \cite{DGNOI}. As a braided fusion category, the core of a braided fusion category $\C$ is the neutral homogeneous component $\C^0_G$ of the de-equivariantization of $\C$ by a maximal Tannakian subcategory $\E \cong \Rep G$. The core of $\C$ is independent of $\E$. Furthermore, the core of a braided fusion category is \emph{weakly anisotropic}, that is, it contains no Tannakian subcategories stable under all braided auto-equivalences. In addition, the core of $\C$ is non-degenerate if $\C$ is non-degenerate.
The complete classification of pointed weakly anisotropic braided fusion categories has been proposed in \cite[Subsection 5.6.1]{DGNOI}.

\medbreak
Let $\C$ be a fusion category. Recall that the Frobenius-Perron dimension of a
simple object $X \in \C$ is defined as the Frobenius-Perron eigenvalue of
the matrix of left multiplication by the class of $X$ in the basis $\Irr(\C)$ of
the Grothendieck ring of $\C$ consisting of isomorphism classes of simple
objects. The Frobenius-Perron dimension of $\C$ is $\FPdim \C =
\sum_{X \in \Irr(\C)} (\FPdim X)^2$. The fusion category $\C$ is called \emph{integral}  if
$\FPdim X$ is a natural number, for all simple object $X \in \C$.

\medbreak A fusion category $\C$ is called \emph{weakly group-theoretical} if $\C$ is categorically Morita equivalent to a nilpotent fusion category (see Subsection \ref{s-nilp} for an overview of these notions). Every weakly group-theoretical fusion category has integer Frobenius-Perron dimension. It is conjectured that every fusion category of integer Frobenius-Perron dimension is weakly group-theoretical \cite{ENO2}.

\medbreak
Recall that a fusion category $\C$ is called \emph{pointed} if the Frobenius-Perron dimension of every simple object of $\C$ is $1$. A fusion category of Frobenius-Perron dimension $4$ which is not pointed is called an \emph{Ising category}. Any Ising category has two simple objects of Frobenius-Perron dimension $1$ and a third simple object of Frobenius-Perron dimension $\sqrt{2}$.  An \emph{Ising braided category} is an Ising fusion category endowed with a braiding. Ising braided categories and their spherical structures are classified in \cite[Appendix B]{DGNOI}; in particular, every Ising braided category is anisotropic and non-degenerate.

The main result of this paper is the following theorem, that describes the core of a weakly group-theoretical braided fusion category.

\begin{theorem}\label{core-wgt} Let $\C$ be a weakly group-theoretical braided fusion category. Then the core of $\C$ is equivalent as a braided fusion category  to a Deligne tensor product	 $\B \boxtimes \D$,
where $\D$ is a pointed weakly anisotropic braided fusion category and either $\B \cong \vect$ or $\B$ is an Ising braided category.

In particular, if $\C$ is integral, then its core is a pointed weakly anisotropic braided fusion category.
\end{theorem}

Theorem \ref{core-wgt} will be proved in Section \ref{core}. The theorem implies the fact, established in \cite{witt-wgt}, that the class of a weakly group-theoretical fusion category in the Witt group of non-degenerate braided fusion categories belongs to the subgroup generated by classes of non-degenerate pointed braided fusion categories and Ising braided categories.

As a consequence of Theorem \ref{core-wgt} we find that, for every weakly group-theoretical braided fusion category $\C$, there is a finite group $G$ such that $\C$ is equivalent to the $G$-equivariantization of a $G$-crossed braided fusion category whose neutral component is either pointed or the tensor product of a pointed braided fusion category and an Ising braided category.  In particular, the centralizer $\E'$ of any maximal Tannakian subcategory $\E$ of an integral weakly group-theoretical braided fusion category is group-theoretical, and the de-equivariantization by such a maximal Tannakian subcategory is a 2-step nilpotent crossed braided fusion category.

\medbreak
Theorem \ref{core-wgt} also alow us to give a characterization of the solvability of a weakly group-theoretical braided fusion category in terms of the solvability of its Tannakian subcategories (Theorem \ref{cor-sol}). This characterization is applied to show, on the one hand, that certain class of non-degenerate braided fusion categories  are solvable (Proposition \ref{paqbd}). In particular, we get that non-degenerate braided fusion categories of Frobenius-Perron dimension $p^aq^bc$, where $c$ is a square-free integer, are solvable (Corollary \ref{paqbd-solv}). On the other hand, we use the mentioned characterization to show that the solvability of a weakly group-theoretical fusion category is determined by the $S$-matrix of its Drinfeld center (Theorem \ref{fr-wgt}).

\medbreak
Also as a consequence of Theorem \ref{core-wgt}, we obtain the following result:

\begin{theorem}\label{cd12} Let $\C$ be an integral non-degenerate braided fusion category such that $\FPdim X \leq 2$, for every simple object $X$ of $\C$. Then $\C$ is group-theoretical.
\end{theorem}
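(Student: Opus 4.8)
The plan is to exhibit $\C$ as the $G$-equivariantization of a \emph{pointed} braided $G$-crossed fusion category and then invoke the fact that the equivariantization of a pointed fusion category is group-theoretical. The role of the hypothesis $\FPdim X \le 2$ is twofold: it first puts $\C$ in range of Theorem \ref{core-wgt}, and then it rigidifies the graded components enough to force pointedness.

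First I would check that $\C$ is weakly group-theoretical. Since $\C$ is integral, every simple object has Frobenius--Perron dimension $1$ or $2$; using the results of Etingof--Nikshych--Ostrik on fusion categories whose simple objects have small Frobenius--Perron dimension, together with the closure of the class of weakly group-theoretical categories under extensions and equivariantizations, one obtains that $\C$ is weakly group-theoretical. Granting this, Theorem \ref{core-wgt} applies, and because $\C$ is integral its core is a pointed, weakly anisotropic, non-degenerate braided fusion category, i.e. a metric group $\C(A,q)$ for a non-degenerate quadratic form $q$ on a finite abelian group $A$.

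Next, fix a maximal Tannakian subcategory $\E\cong\Rep G$ of $\C$ and pass to the de-equivariantization $\C_G$, a faithfully $G$-graded braided $G$-crossed fusion category whose neutral component is the core $\C(A,q)$ and with $\C\cong(\C_G)^G$. Since de-equivariantization does not increase the Frobenius--Perron dimensions of simple objects, $\C_G$ is again integral with all simples of Frobenius--Perron dimension at most $2$, and its adjoint subcategory, lying in the neutral component, is pointed. The heart of the proof is to show that $\C_G$ is itself pointed. Every homogeneous component of $\C_G$ has Frobenius--Perron dimension $|A|$; a non-invertible simple $Y$ in a non-neutral component $(\C_G)_g$ would have $\FPdim Y=2$, and since $Y\otimes Y^{*}$ lies in the pointed neutral component it would satisfy $Y\otimes Y^{*}\cong\bigoplus_{h\in H}h$, where $H=\{h\in A:\ h\otimes Y\cong Y\}$ is a subgroup of $A$, necessarily of order $4$. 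I would rule this out using the non-degeneracy of the metric group $(A,q)$ against the $G$-crossed braiding of $Y$ with the invertible objects of $A$: such a $Y$ and $H$ generate a near-group subcategory of $\Rep(Q_8)$- or $\Rep(D_4)$-type, whose presence is incompatible with the neutral component being non-degenerate. Hence every component of $\C_G$ is pointed, $\C_G$ is pointed, and $\C\cong(\C_G)^G$ is group-theoretical.

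The main obstacle is exactly this last step, namely excluding objects of Frobenius--Perron dimension $2$ in the non-neutral components of $\C_G$. This is where $\FPdim X\le 2$ enters essentially and cannot be weakened: pointedness of the core by itself does \emph{not} imply group-theoreticity, because there exist integral weakly group-theoretical fusion categories that are not group-theoretical, whose Drinfeld centers are integral, non-degenerate, weakly group-theoretical and have pointed core yet fail to be group-theoretical. The argument must therefore play the order-$4$ stabilizer rigidity of dimension-$2$ objects off against the non-degeneracy and weak anisotropy of the metric-group core, which is precisely the feature that distinguishes the present situation from those counterexamples.
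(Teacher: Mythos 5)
Your reduction to Theorem \ref{core-wgt} and the passage to the de-equivariantization $\D=\C_G$ by a maximal Tannakian subcategory $\E\cong\Rep G$ agree with the paper (and your first step is fine: solvability, hence weak group-theoreticity, under $\cd(\C)\subseteq\{1,2\}$ is exactly the result of \cite{cd2} that the paper invokes). The gap is at your decisive step, the claim that $\D$ itself is pointed. You assert that a $2$-dimensional simple object $Y$ in a non-neutral component, with stabilizer $H\subseteq A$ of order $4$, would ``generate a near-group subcategory of $\Rep(Q_8)$- or $\Rep(D_4)$-type whose presence is incompatible with the neutral component being non-degenerate.'' No such incompatibility is established, and none holds in this generality: a non-neutral homogeneous component of a faithfully graded $G$-crossed extension of $\C(A,q)$ is an invertible $\C(A,q)$-module category whose simple objects all have Frobenius--Perron dimension $\sqrt{[A:A^{\sigma}]}$, where $A^{\sigma}$ is the subgroup of invertibles fixed by the associated autoequivalence; this equals $2$ exactly when $[A:A^{\sigma}]=4$, and the non-degeneracy of $q$ does not rule this out. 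The paper's own proof presupposes that such objects can occur: Lemma \ref{gama} is devoted precisely to $2$-dimensional simples $S$ of $\D$ via $S\otimes S^*\cong\bigoplus_{g\in G[S]}g$ with $|G[S]|=4$, and it concludes only that $\D_{ad}$ is pointed with $G(\D_{ad})$ an abelian $2$-group, i.e.\ that $\D$ is $2$-step nilpotent (as stated in the corollary following Theorem \ref{core-wgt}) --- not that $\D$ is pointed. If $\D$ were always pointed, the theorem would follow in one line, since an equivariantization of a pointed category is group-theoretical; the fact that the paper does not argue this way is strong evidence that it is false.

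Because of this, the actual proof has to work through the Drinfeld center: using $\Z(\D)\cong\C\boxtimes(\C_G^0)^{\rev}$ it shows that the universal grading group $U=U(\D)$ is abelian (Lemma \ref{uabel}), that $\Z(\D)$ is a $U$-extension of $(\Rep U)'\cong\Z(\D_{ad})^U$ (Corollary \ref{zd-ext}), and that this equivariantization of a $2$-category by an abelian group with all simple dimensions at most $2$ is nilpotent (Lemmas \ref{equiv-nil} and \ref{repu-nil}); hence $\C$ is an integral nilpotent non-degenerate braided fusion category and \cite[Corollary 9.4]{dgno-nilpotent} yields group-theoreticity. Your closing paragraph correctly observes that pointedness of the core alone cannot suffice, but the mechanism you propose for exploiting $\cd(\C)\subseteq\{1,2\}$ is not substantiated, so as written the proposal does not prove the theorem.
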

	
Theorem \ref{cd12} will be proved in Section \ref{s-cd12}. Recall that a fusion category $\C$ is group-theoretical if it is categorically Morita equivalent to a pointed fusion category. Group-theoretical fusion categories are completely classified in terms of finite groups and their cohomology.

\medbreak
The paper is organized as follows. In Section \ref{s-pre} we recall the relevant notions on fusion categories used throughout the paper. In Section \ref{prev-core} we prove the existence of nontrivial Tannakian subcategories in certain integral  weakly group-theoretical braided fusion categories. This result is applied in the proof of Theorem \ref{core-wgt}, that we give in Section \ref{core}; some consequences of Theorem \ref{core-wgt} are also given in this section. In Section \ref{s-solv-wgt} we discuss some conditions that guarantee the solvability of a weakly group-theoretical braided fusion category. Finally, in Section \ref{s-cd12} we study non-degenerate integral braided fusion categories with Frobenius-Perron dimensions of simple objects at most $2$, and give a proof of Theorem \ref{cd12}.
	
\section{Preliminaries}\label{s-pre}
	
We shall work over an algebraically closed field $k$ of characteristic zero. The
category of finite dimensional vector spaces
over $k$ will be denoted by $\vect$.  A fusion category over $k$ is a semisimple tensor category over $k$ with
finitely many isomorphism classes of simple objects.
We refer the reader to \cite{ENO}, \cite{ENO2}, \cite{DGNOI} for the notions on fusion categories and braided fusion categories used throughout.

\medbreak
Let $\C$ be a fusion category. We shall denote by $\Irr(\C)$ the set of isomorphism classes of simple objects of $\C$. We shall use the notation $\cd(\C)$ to indicate the set of Frobenius-Perron dimensions of simple objects of $\C$, that is,
$$\cd(\C) = \{ \FPdim X: \, X \in \Irr(\C)\}.$$

The fusion subcategory of $\C$ generated by objects $X_1, \dots, X_n$ of $\C$ will be denoted $\langle X_1, \dots, X_n\rangle$; this is the smallest fusion subcategory containing the objects $X_1, \dots, X_n$. The fusion subcategory generated by fusion subcategories $\D_1, \dots, \D_n$ will also be denoted $\D_1 \vee \dots \vee \D_n$.

The group of isomorphism classes of invertible objects of $\C$ will be indicated by $G(\C)$. The largest pointed subcategory of $\C$, denoted $\C_{pt}$, is thus the fusion subcategory generated by $G(\C)$.
	
\medbreak
Let $\C, \D$ be fusion categories and let $F:\C \to \D$ be a tensor functor.
The functor $F$ is called \emph{dominant} if every object $Y$ of $\D$ is a subobject of $F(X)$ for some object $X$ of $\C$.

We shall denote by $\KER_F$ the fusion subcategory of $\C$ whose objects are those $X \in \C$ such that $F(X)$ is a trivial object of $\D$, that is, a direct sum of copies of the unit object of $\D$. The functor $F$ is \emph{normal} if every simple object $X$ of $\C$ such that $\Hom_\D(\1, X) \neq 0$ belongs to $\KER_F$.
	
\medbreak The fusion subcategory  of $\D$ generated by the essential image of $F$ will be denoted $F(\C)$.  Thus $F(\C)$ is generated as an additive category by the objects $Y$ of $\D$ which are subobjects of $F(X)$, for some object $X$ of $\C$. Observe that, if $F: \C \to \D$ is any tensor functor between fusion categories $\C, \D$, then the corestriction of $F$  is a dominant tensor functor   $\C \to F(\C)$.

\medbreak A $\C$-module category is a finite semisimple $k$-linear abelian category $\M$ endowed with a bifunctor $\M \times \C \to \M$ satisfying, up to coherent natural isomorphisms, the usual associativity and unit axioms for an action. If $\M$ is an \emph{indecomposable} $\C$-module category, i.e. $\M$ cannot be decomposed into a direct sum of proper $\C$-module subcategories, then the category $\textrm{Fun}_\C(\M, \M)$ of $\C$-module endofunctors of $\M$ is a fusion category.

Two fusion categories $\C$ and $\D$ are called \emph{categorically Morita equivalent} if there exists an equivalence of fusion categories $\D^{op} \cong \textrm{Fun}_\C(\M, \M)$, for some indecomposable $\C$-module category $\M$. By \cite[Theorem 3.1]{ENO2}, $\C$ and $\D$ are called categorically Morita equivalent if and only if its Drinfeld centers $\Z(\C)$ and $\Z(\D)$ are equivalent as braided fusion categories.
	
\subsection{Nilpotent, weakly group-theoretical and solvable fusion categories}\label{s-nilp} Let $G$ be
a finite group and let $\C$ be a fusion category. A $G$-grading on a fusion category $\C$ is a decomposition $\C =
\oplus_{g\in G} \C_g$, such that $\C_g \otimes \C_h \subseteq \C_{gh}$, for all $g, h \in G$.
The fusion category $\C$ is called a \emph{$G$-extension} of a
fusion category $\D$ if there is a faithful grading $\C = \oplus_{g\in G} \C_g$
with neutral component $\C_e \cong \D$.
	
\medbreak Let $\C_{ad}$ be the \emph{adjoint subcategory} of $\C$, that is, the fusion subcategory generated by $X\otimes X^*$, where $X$ runs over the simple objects of $\C$. Then the fusion category $\C$ has a canonical faithful grading $\C =
\oplus_{g \in U(\C)}\C_g$, with neutral component $\C_e = \C_{ad}$. The group $U(\C)$ is called the
\emph{universal grading group} of $\C$. See \cite{gel-nik}.

\medbreak
Let $\C$ be a fusion category such that $\FPdim \C \in \Zz$. By \cite[Theorem 3.10]{gel-nik}, there exists an elementary abelian 2-group $E$ and a set of pairwise distinct square-free natural numbers $n_x$, $x \in E$, with $n_0 = 1$, such that
$\C$ is endowed with a faithful $E$-grading $$\C=\oplus_{x\in E}\C_x,$$ where, for all $x\in E$, $\C_x$ is the full subcategory whose simple objects $X$ satisfy $\FPdim X \in \Zz \sqrt{n_x}$.

In particular, the neutral component $\C_0 = \C_{int}$ of this grading is the unique maximal integral fusion subcategory of $\C$.

\medbreak The \emph{descending central series} of $\C$ is the series of fusion subcategories
\begin{equation}
\dots \subseteq \C^{(n+1)} \subseteq \C^{(n)} \subseteq \dots \subseteq \C^{(1)}
\subseteq \C^{(0)} = \C,
\end{equation}
defined recursively as $\C^{(n+1)} = (\C^{(n)})_{ad}$, for all $n \geq 0$.

\medbreak Observe that if $\C = \oplus_{g\in G}\C_g$ is any grading on a fusion category $\C$, then $\C_{ad} \subseteq \C_e$. In particular, if $\FPdim \C \in \Zz$, then $\C_{ad} \subseteq \C_{int}$, that is, $\C_{ad}$ is an integral fusion subcategory.

\begin{remark} The fusion subcategories $\C^{(n)}$, $n \geq 0$, are stable under any tensor autoequivalence of $\C$. In addition, if $\FPdim \C \in \Zz$, then $\C_{int}$ is also stable under any tensor autoequivalence of $\C$.
\end{remark}

\begin{remark}\label{gding-sub} Suppose that $\C$ is a fusion category that admits a faithful grading $\C = \oplus_{g\in G} \C_g$. If $\D \subseteq \C$ is a fusion subcategory, then $\D$ has a faithful grading $\D = \oplus_{h\in H} \D_h$, where $H$ is the subgroup of $G$ defined by $H = \{g \in G:\; \D\cap \C_g \neq 0\}$, and $\D_h = \D \cap \C_h$, for all $h \in H$.
	
In particular, if $\D$ is such that $\D_{ad} = \D$, then necessarily $H = \{ e\}$, hence $\D \subseteq \C_e$.	
\end{remark}

\medbreak The fusion category $\C$ is \emph{nilpotent} if there exists $n \geq 0$ such that $\C^{(n)} \cong \vect$. If $\C$ is categorically Morita equivalent to a nilpotent fusion category, then $\C$ is called \emph{weakly group-theoretical}.

It is known that the class of weakly group-theoretical fusion categories is stable under taking fusion subcategories, component categories in quotient categories, Morita equivalent categories, tensor products, Drinfeld centers, equivariantizations and group extensions \cite[Proposition 4.1]{ENO2}.

\medbreak
A fusion category $\C$ is called \emph{cyclically nilpotent} if there exists a series of fusion subcategories $\vect\cong \C_0 \subseteq \dots \subseteq \C_N = \C$, such that for all $i = 0, \dots, N-1$, $\C_{i+1}$ is a $\mathbb Z_{p_i}$-extension of $\C_{i}$, for some prime numbers $p_1, \dots, p_N$.
If a weakly group-theoretical fusion category is categorically Morita equivalent to a cyclically nilpotent fusion category, then it is called \emph{solvable}. By \cite[Proposition 4.5]{ENO2}, the class of solvable categories is closed under taking extensions and equivariantizations by solvable groups, Morita equivalent categories, tensor products, Drinfeld center, fusion subcategories and component categories of quotient categories.
In addition, for every finite group $G$, the fusion category $\Rep G$ is solvable if and only if the group $G$ is solvable.

\subsection{Group actions and equivariantizations}\label{ss-equiv}

Consider  an action of a finite group $G$ on a fusion
category $\C$ by tensor autoequivalences $\rho: \underline
G \to \underline \Aut_{\otimes} \, \C$.
The \emph{equivariantization} of $\C$ with respect to the action $\rho$, denoted
$\C^G$, is a fusion category whose objects are pairs  $(X, \mu)$, such that $X$
is an object of $\C$ and $\mu = (\mu^g)_{g \in G}$, is a collection of
isomorphisms $\mu^g:\rho^gX \to X$, $g \in G$, satisfying appropriate
compatibility conditions.

\medbreak
The forgetful functor $F: \C^G \to \C$, $F(X, \mu) = X$,
is a normal dominant tensor functor that gives rise to an exact sequence of
fusion categories $$\Rep G \longrightarrow \C^G \overset{F}\longrightarrow \C.$$ See \cite{tensor-exact}.

\medbreak Simple objects of $\C^G$ are parameterized by pairs $(Y, \pi)$, where $Y$ runs over the $G$-orbits on $\Irr(\C)$ and $\pi$ is an equivalence class of an  irreducible $\alpha_Y$-projective representation of the inertia subgroup
$$G_Y = \{g\in G:\,  \rho^g(Y) \cong  Y \},$$ for certain 2-cocycle $\alpha_Y: G_Y \times G_Y \to k^{\times}$ \cite[Corollary 2.13]{fr-equiv}. We shall use the notation $S_{Y, \pi}$ to indicate the isomorphism class of the simple object corresponding to the pair $(Y, \pi)$. Then the dimension of $S_{Y, \pi}$ is given by the formula
\begin{equation}\label{dim-equiv}\FPdim S_{Y, \pi} =  [G:G_Y] \dim \pi \FPdim Y,\end{equation}
and we have an isomorphism
\begin{equation}F(S_{Y, \pi}) \cong \bigoplus_{g\in G/G_Y} \, \rho^g(Y)^{(\dim \pi)}.
\end{equation}

As a consequence of \eqref{dim-equiv}, we have the following lemma that will be needed in the course of the proof of Theorem \ref{cd12} in Section \ref{s-cd12} (see Lemma \ref{repu-nil}).

\begin{lemma}\label{equiv-nil} Let $\B$ be an integral fusion category such that $\FPdim \B = 2^n$, $n \geq 1$. Let also $G$ be a finite abelian group and let $\rho: \underline G \to \Aut_{\otimes}\B$ be an action of $G$ on $\B$ by tensor autoequivalences such that the Grothendieck ring $K_0(\B^G)$ is commutative and  $\cd(\B^G) \subseteq \{1, 2\}$. Then $\B^G$ is nilpotent. \end{lemma}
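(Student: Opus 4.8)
The plan is to reduce the statement to a group of $2$-power order, where one can invoke the fact that a fusion category of prime-power Frobenius-Perron dimension is nilpotent \cite{gel-nik}, and to dispose of the odd part of $G$ by a coprimality argument. Throughout I will use that $\B^G$ is integral, which is immediate from $\cd(\B^G)\subseteq\{1,2\}$.

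First I would extract the structure of the $G$-action from the hypothesis on dimensions. Applying the dimension formula \eqref{dim-equiv} to an arbitrary simple object $S_{Y,\pi}$ of $\B^G$ gives $[G:G_Y]\,\dim\pi\,\FPdim Y=\FPdim S_{Y,\pi}\leq 2$. As $Y$ ranges over all $G$-orbits on $\Irr(\B)$, this forces three things at once: $\FPdim Y\leq 2$ for every $Y$ (so that $\cd(\B)\subseteq\{1,2\}$ as well, $\B$ being in particular nilpotent of prime-power dimension), every orbit of an invertible object has at most two elements, and every simple object of $\B$ of Frobenius-Perron dimension $2$ is $G$-stable. Writing the finite abelian group as $G=P\times Q$ with $P$ its Sylow $2$-subgroup and $|Q|$ odd, the orbit sizes just found are all $1$ or $2$, so since $|Q|$ is odd every element of $Q$ must fix each element of $\Irr(\B)$; that is, $Q$ acts trivially on the set of isomorphism classes of simple objects of $\B$.

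Next I would equivariantize in two stages, using $\B^G\cong(\B^P)^Q$. The category $\B^P$ has $\FPdim\B^P=|P|\,2^n$, a power of $2$, hence is nilpotent by the prime-power dimension theorem \cite{gel-nik}; moreover, since the forgetful functor $\B^G\to\B^P$ is dominant, $\cd(\B^P)\subseteq\{1,2\}$. Because $P$ and $Q$ commute and $Q$ fixes every simple object of $\B$, the induced action of $Q$ on $\B^P$ again fixes every simple object of $\B^P$; here is where I expect the commutativity of $K_0(\B^G)$ to be used, namely to guarantee that the accompanying projective-representation data is likewise $Q$-invariant, so that no simple object of $\B^P$ is moved by $Q$.

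The delicate point, and the step I expect to be the main obstacle, is to promote ``$Q$ fixes every simple object of $\B^P$'' to a genuine trivialization of the $Q$-action up to a pointed factor, yielding $\B^G\cong(\B^P)^Q\cong\B^P\boxtimes\Rep Q\cong\B^P\boxtimes\vect_{\hat Q}$. The mechanism is coprimality: a $Q$-action fixing all simple objects is controlled by data valued in the abelian group $\Aut_{\otimes}(\id_{\B^P})\cong\widehat{U(\B^P)}$, which is a $2$-group since $\FPdim\B^P$ is a power of $2$, so that the relevant cohomology $H^{\bullet}(Q,\widehat{U(\B^P)})$ vanishes because $|Q|$ is odd. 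Once this decomposition is established the conclusion is immediate: $\B^P$ is nilpotent by the previous paragraph, $\vect_{\hat Q}$ is pointed and hence nilpotent, and a Deligne tensor product of nilpotent fusion categories is nilpotent \cite{gel-nik}; therefore $\B^G$ is nilpotent. Thus the entire burden of the argument rests on the coprimality/trivialization step of this last paragraph, together with the verification that commutativity indeed renders the $Q$-action on the simple objects of $\B^P$ trivial.
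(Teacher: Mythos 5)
Your strategy --- split $G = P \times Q$ into its Sylow $2$-part and odd part, handle $\B^P$ via nilpotency of fusion categories of prime power dimension, and kill the $Q$-part by coprimality --- is genuinely different from the paper's argument, but it has a real gap at exactly the step you flag as carrying ``the entire burden'', and the gap is not cosmetic. Knowing that each $\rho^q$, $q \in Q$, fixes every isomorphism class of simple objects of $\B^P$ only says that $\rho^q$ is isomorphic to $\id_{\B^P}$ as a plain $k$-linear functor; it does not say that $\rho^q \cong \id_{\B^P}$ as a \emph{tensor} functor. The group $\Aut_{\otimes}(\id_{\B^P}) \cong \widehat{U(\B^P)}$ you invoke classifies the difference between two tensor trivializations of an autoequivalence, not the obstruction to the existence of one: the image of $Q$ in $\Aut_{\otimes}(\B^P)$ lands in the subgroup of isomorphism classes of tensor autoequivalences that are trivial on objects (the ``soft'' or gauge autoequivalences), which is a different and generally larger group than $\widehat{U(\B^P)}$, and you give no argument (nor is there a general theorem) that it is a $2$-group when $\B^P$ is not pointed. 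Until that is settled, the vanishing of $H^{2}(Q, \widehat{U(\B^P)})$ does not apply and the decomposition $(\B^P)^Q \cong \B^P \boxtimes \Rep Q$ is an unproved assertion. A secondary weakness: your justification that $Q$ fixes every simple object of $\B^P$ (commutativity of $K_0(\B^G)$ forcing $Q$-invariance of the projective representation data) is vague, though the conclusion itself is rescuable --- apply the dimension formula to $(\B^P)^Q \cong \B^G$ to see that $Q$-orbits on $\Irr(\B^P)$ have size at most $2$, hence size $1$ since $|Q|$ is odd.

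For comparison, the paper never attempts to trivialize the action. It works with the forgetful functor $F: \B^G \to \B$ and proves by induction that $F((\B^G)^{(n)}) \subseteq \B^{(n)} \vee \B_{pt}$, so that, $\B$ being nilpotent, some term $\D$ of the descending central series of $\B^G$ satisfies $F(\D) \subseteq \B_{pt}$; it then tracks $F(X \otimes X^*)$ for $2$-dimensional simples $X$, showing that $\D^{(n)}$ is generated by $(\B^G)_{pt}$ together with simples $X$ with $F(X) \cong Z^{2^{n-1}} \oplus Z^{-2^{n-1}}$, and the series terminates because $G(\B)$ is a $2$-group and $\KER_F \cong \Rep G$ is pointed ($G$ being abelian). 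The commutativity of $K_0(\B^G)$ and the abelianness of $G$ enter there concretely, not through a cohomological trivialization. If you want to complete your route, the missing ingredient is a proof that an odd-order subgroup of $\Aut_{\otimes}(\B^P)$ acting trivially on $\Irr(\B^P)$ is trivial; as written, that step is asserted rather than proved.
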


\begin{proof} Suppose that $X_1, \dots, X_n$, $n \geq 1$, are simple objects of $\B^G$ that generate a fusion subcategory $\D$. The commutativity of the Grothendieck ring $K_0(\B^G)$ guarantees that the simple constituents of the objects $X_1\otimes X_1^*, \dots, X_n\otimes X_n^*$, generate the adjoint subcategory $\D_{ad}$.

\medbreak
We may assume that $\B^G$ is not pointed.
Let $F: \B^G \to \B$ denote the canonical normal tensor functor such that $\KER_F \cong  \Rep G$.

\medbreak We first claim that $F((\B^G)^{(n)}) \subseteq \B^{(n)} \vee \B_{pt}$, for all $n \geq 0$. To prove this claim, we argue by induction on $n$ as follows. If $n = 0$ there is nothing to prove. Let $n \geq 0$ and let $X$ be a simple object of $(\B^G)^{(n+1)}$.  Since $(\B^G)^{(n+1)} = ((\B^G)^{(n)})_{ad}$, there exist simple objects  $X_1, \dots, X_m$ of $(\B^G)^{(n)}$ such that $X$ is a simple constituent of $X_1\otimes X_1^* \otimes \dots \otimes X_m\otimes X_m^*$.

By induction, $F(X_1), \dots, F(X_m) \in \B^{(n)} \vee \B_{pt}$. Moreover, for every $1\leq i \leq m$, we have that either $F(X_i)$ is simple or else $F(X_i) \in \B_{pt}$. Hence
$$F(X_1\otimes X_1^* \otimes \dots \otimes X_m\otimes X_m^*) \cong F(X_1)\otimes F(X_1)^* \otimes \dots \otimes F(X_m)\otimes F(X_m)^*$$
belongs to $\B^{(n+1)} \vee \B_{pt}$.
Since $F(X)$ is a direct summand of $F(X_1\otimes X_1^*) \otimes \dots \otimes F(X_m\otimes X_m^*)$, then $F(X) \in \B^{(n+1)} \vee \B_{pt}$. Thus $F((\B^G)^{(n+1)}) \subseteq \B^{(n+1)} \vee \B_{pt}$, and the claim follows.

\medbreak Since $\FPdim \B = 2^n$, then $\B$ is nilpotent \cite[Theorem 8.28]{ENO}. Therefore there exists $N \geq 1$ such that $\B^{(N)} \cong \vect$. Let $\D = (\B^G)^{(N)}$ so that, in view of the previous claim, $F(\D) \subseteq \B_{pt}$. We shall show next that the category $\D$ is nilpotent, which will imply that $\B^G$ is nilpotent.

\medbreak
Let $X$ be a simple object of $\D$.
Suppose that $X = S_{Y, \pi}$ corresponds to a pair $(Y, \pi)$, where $Y$ is a simple object of $\B$ and $\pi$ is an irreducible $\alpha_Y$-representation of the inertia subgroup $G_Y$.
Assume that $X$ is not invertible, that is, $\FPdim X = 2$. Since $F(\D) \subseteq \B_{pt}$, then we have $\FPdim Y = 1$.
In view of \eqref{dim-equiv}, this implies that $F(X) \cong \bigoplus_{g \in G/G_Y}\rho^g(Y)^{(e)}$, where $G = G_Y$ and $e = \dim \pi = 2$ or $G \neq G_Y$ and $e = \dim \pi = 1$.


Since $Y$ is invertible, then $Y \otimes Y^* \cong YY^{-1} \cong \1$. Thus, if $G = G_Y$, then $F(X \otimes X^*) \cong F(X) \otimes F(X^*) \cong  (Y \otimes Y^*)^{(4)} \cong \1^{(4)}$. This implies that $X \otimes X^* \in \KER_F \cong \Rep G$. Since $G$ is abelian, then we get that in this case $X \otimes X^* \in (\B^G)_{pt}$.

\medbreak
Suppose that $G \neq G_Y$, that is, $F(X) \cong Y \oplus \rho^g(Y)$, where $g \in G$ is such that  $\rho^g(Y) \ncong Y$. Then $F(X)^* \cong Y^{-1} \oplus \rho^g(Y)^{-1}$ and
\begin{equation}\label{adj-1}F(X) \otimes F(X)^* \cong \1^{(2)} \oplus \rho^g(Y) Y^{-1} \oplus Y \rho^g(Y)^{-1} = \1^{(2)} \oplus Z \oplus Z^{-1},\end{equation}
where $Z = \rho^g(Y) Y^{-1}$ is an invertible object of $\B$.

\medbreak
Decomposing $X \otimes X^*$ into a direct sum of simple objects, we find that either $X \otimes X^* \in (\B^G)_{pt}$ or $X \otimes X^* \cong \1 \oplus a \oplus X_1$, where $a \in G(\B^G)$ and $X_1$ is a 2-dimensional simple object. The decomposition \eqref{adj-1} implies that $F(X_1) \cong Z \oplus Z^{-1}$; indeed, since $F$ is normal, then the unit object $\1$ has multiplicity zero in $F(X_1)$ (otherwise $X_1 \in \KER_F$, which is impossible since $\KER_F$ is pointed).

\medbreak
We have thus shown that $\D_{ad}$ is contained in the fusion subcategory
$$(\B^G)_{pt} \vee \langle X \in \Irr(\B^G): \, F(X) \cong Z \oplus Z^{-1}, \text{ for some } Z \in G(\B) \rangle.$$
	
Observe that, if $X$ is a simple object such that $F(X) \cong  Z \oplus Z^{-1}$, for some $Z \in G(\B)$, then a simple constituent $X_1$ of $X \otimes X^*$ is either invertible or it satisfies $F(X_1) \cong  Z^2 \oplus Z^{-2}$: this follows from the relation $F(X \otimes X^*) \cong \1^{(2)} \oplus Z^2 \oplus Z^{-2}$ and the normality of $F$, since if $X_1$ is not invertible, the unit object $\1$ has multiplicity zero in $F(X_1)$.

It follows from an inductive argument that, for all $n \geq 1$, $\D^{(n)}$ is contained in the fusion subcategory
$$(\B^G)_{pt} \vee \langle X \in \Irr(\B^G): \, F(X) \cong Z^{2^{n-1}} \oplus Z^{-2^{n-1}}, \text{ for some } Z \in G(\B) \rangle.
$$

Since the group $G(\B)$ is a $2$-group, this implies that $\D^{(m)} \subseteq (\B^G)_{pt} \vee \KER_F$, for some $m$, and since $\KER_F \cong \Rep G$ is pointed, then $\D^{(m+1)} \cong \vect$ and therefore $\D$ is nilpotent, as claimed. As observed before, this implies that $\B^G$ is nilpotent and finishes the proof of the lemma.
\end{proof}

\subsection{Centralizers in braided fusion categories} Let $\C$ be a braided fusion category and let $\D$ be a fusion subcategory of  $\C$. The \emph{M\" uger centralizer} $\D'$ of $\D$ in $\C$ is the fusion subcategory of $\C$ generated by objects $X \in \C$ such that $c_{Y, X}c_{X, Y} = \id_{X \otimes Y}$, for all objects $Y \in \D$. The centralizer $\C'$ of $\C$ is called the \emph{M\" uger (or symmetric) center} of $\C$.

A braided fusion category $\C$ is called \emph{symmetric} if $\C' = \C$. A symmetric fusion category is called \emph{Tannakian} if $\C \cong \Rep G$ as braided fusion categories, for some finite group $G$, where the braiding in $\Rep G$ is given by the flip isomorphism $X \otimes Y \to Y \otimes X$.

\begin{remark} Let $\C, \tilde \C$ be braided fusion categories and let $F: \C \to \tilde\C$ be a braided tensor functor, that is,  $F$ is a tensor functor such that $F(c_{X, Y}) = F^2_{Y, X} \tilde c_{F(X), F(Y)} {F^2_{X, Y}}^{-1}$, for all $X, Y \in \C$, where $c$ and $\tilde c$ denote the braidings in $\C$ and $\tilde \C$, respectively, and  $F^2_{X, Y}: F(X) \otimes F(Y) \to F(X \otimes Y)$ is the monoidal structure on $F$.
Then, for every fusion subcategory $\D$ of $\C$, we have $F(\D') \subseteq F(\D)'$.  Hence, if $F$ is a braided equivalence, then $F(\D') = F(\D)'$. In particular, $F(\C') = \C'$, for every braided autoequivalence of $\C$.
\end{remark}
	
\medbreak  Given a symmetric fusion category $\C$, there exist a finite
group $G$ and a central element $u \in G$ of order $2$, such that $\C$ is equivalent to the category $\Rep(G, u)$ of representations of $G$ on finite-dimensional super-vector spaces where $u$ acts as the parity operator  \cite{deligne}.
If $\C = \Rep(G, u)$, then $\E = \Rep (G/(u))$ is the unique maximal Tannakian subcategory of $\C$ and there is a faithful $\Zz_2$-grading $\C = \E_0 \oplus \E_1$, with $\E_0 = \E$. In particular, if $\FPdim \C > 2$, then $\E_0$ is a nontrivial Tannakian subcategory of $\C$. In addition, if $\C$ is a symmetric fusion category of Frobenius-Perron dimension $2$ which is not Tannakian, then $\C$ is equivalent to the category $\svect$ of finite-dimensional super-vector spaces. See \cite[2.12]{DGNOI}.
	
\medbreak	
A braided fusion category $\C$ is  called \emph{non-degenerate}
if $\C' \cong \vect$, and it is called \emph{slightly degenerate} if $\C' \cong \svect$. For instance, the Drinfeld center $\Z(\D)$ of a fusion category $\D$ is a non-degenerate braided fusion category.

\medbreak If $\D\subseteq \C$ is a fusion subcategory such that $\D$ is non-degenerate, then $\D'$ is non-degenerate as well and $\C \cong \D \boxtimes \D'$ as braided fusion categories \cite[Theorem 4.2]{mueger-str}.
	
\subsection{Tannakian subcategories and braided crossed fusion categories}\label{2.4}
	
Let $G$ be a finite group. A \emph{braided $G$-crossed fusion
category} is a fusion category $\D$ endowed with a $G$-grading $\D
= \oplus_{g \in G}\D_g$ and an action of $G$ by tensor autoequivalences
$\rho:\underline G \to \Aut_{\otimes} \, \D$, such that $\rho^g(\D_h)
\subseteq \D_{ghg^{-1}}$, for all $g, h \in G$, and a $G$-braiding $c: X \otimes Y \to
\rho^g(Y) \otimes X$, $g \in G$, $X \in \D_g$, $Y \in \D$, subject to
compatibility conditions. See \cite{turaev-hqft}. If $\D$ is a braided $G$-crossed fusion category, then the equivariantization $\D^G$ is a braided fusion category, with braiding induced from the $G$-braiding in $\D$. Furthermore, the canonical embedding $\Rep G \to \D^G$ identifies $\Rep G$ with a Tannakian subcategory of $\D^G$.
	
\medbreak Conversely, suppose that $\C$ is a braided fusion category and $\E \cong \Rep G$ is a Tannakian subcategory of $\C$.
Then the de-equivariantization $\C_G$ of $\C$ with respect to $\E$ is a braided $G$-crossed fusion category in a canonical way, and there is an equivalence of braided fusion categories $\C \cong (\C_G)^G$.

In this way, equivariantization and de-equivariantization define inverse bijections between equivalence classes of braided fusion categories containing $\Rep G$ as a Tannakian subcategory and equivalence classes of $G$-crossed braided fusion categories \cite{mueger-crossed}, \cite{kirillov}, \cite[Section 4.4]{DGNOI}.

\medbreak
An instance of this correspondence occurs when $\C = \Z(\D)$ is the Drinfeld center of a fusion category $\D$, such that $\D$ is a $G$-extension of a fusion category $\D_0$. In this case $\Rep G$ is a Tannakian subcategory of $\C$ and the neutral homogeneous component of the $G$-crossed braided fusion category $\C_G$ is equivalent to the Drinfeld center $\Z(\D_0)$; see \cite{center-gdd}.

\medbreak 	
Let $\C$ be a braided fusion category and let $\E \cong \Rep G$ be a Tannakian subcategory of $\C$. Let also $\C_G^0$ denote the neutral component of $\C_G$ with respect to the associated $G$-grading. Then $\C_G^0$ is a braided fusion category and the crossed action of $G$ on $\C_G$ induces an action of $G$ on $\C_G^0$ by braided auto-equivalences. Moreover, there is an equivalence of braided fusion categories $(\C_G^0)^G \cong \E'$, where $\E$' is the centralizer in $\C$ of the Tannakian subcategory $\E$.
	
The braided fusion category $\C$ is non-degenerate if and only if $\C_G^0$ is non-degenerate and the $G$-grading of $\C_G$ is faithful \cite[Proposition 4.6 (ii)]{DGNOI}.  In this case there is an equivalence of braided fusion categories
$$\C \boxtimes \C_G^0 \cong \Z(\C_G).$$
See \cite[Corollary 3.30]{witt-ndeg}. In particular, if $\C$ is a non-degenerate braided fusion category containing a Tannakian subcategory $\E \cong \Rep G$, then
$$\dfrac{\FPdim \C}{|G|^2} = \FPdim \C_G^0.$$
Hence, if $\FPdim \C_G^0$ is an integer, then $\FPdim \C$ is an integer as well and $(\FPdim \E)^2 = |G|^2$ divides $\FPdim \C$.

\subsection{The core of a braided fusion category}\label{ss-core}

Let $\C$ be a braided fusion category. The \emph{core} of $\C$ was introduced in \cite{DGNOI}. As a braided fusion category, the core of $\C$ is the neutral homogeneous component $\C^0_G$ of the de-equivariantization of $\C$ by a maximal Tannakian subcategory $\E \cong \Rep G$. By \cite[Theorem 5.9]{DGNOI}, the core of $\C$ is independent of $\E$.

\begin{remark} Observe that a braided fusion category $\C$ is weakly group-theoretical if and only if its core is weakly group-theoretical.
\end{remark}

Recall from \cite{DGNOI} that $\C$ is called \emph{anisotropic} if $\C$ contains no nontrivial Tannakian subcategories and it is called \emph{weakly anisotropic} if it contains no nontrivial Tannakian subcategories stable under all braided auto-equivalences of $\C$.
It is shown in \cite[Corollaries 5.19 and 5.15]{DGNOI} that the core is a weakly anisotropic braided fusion category, and it is non-degenerate if $\C$ is non-degenerate.

\begin{example} The core of an anisotropic braided fusion category $\C$ is $\C$ itself. In particular, if $\I$ is an Ising braided category, then the core of $\I$ is $\I$.
On the other hand, if $\I_1$ and $\I_2$ are Ising braided categories, then the core of $\I_1 \boxtimes \I_2$ is a pointed braided fusion category; see \cite[Lemma B.24]{DGNOI}. 	
\end{example}

\begin{example} Suppose that $\D$ is a fusion category with \emph{fermionic Moore-Read fusion rules}, that is, the isomorphism classes of simple objects of $\C$ consist of four invertible objects $\1, g, g^2, g^3$, and two (dual) simple objects $X$ and $X'$ of Frobenius-Perron dimension $\sqrt{2}$,  such that $\D$ has commutative fusion rules determined by
$g^i\otimes g^j \cong g^{i+j (\textrm{mod }n)}$, $0\leq i, j \leq 3$, and
\begin{align*}& g^2 \otimes X \cong X, \quad g^2 \otimes X' \cong
X', \quad X \otimes X' \cong \1 \oplus g^2, \\
& g \otimes X \cong X', \quad g \otimes X' \cong
X, \quad X \otimes X \cong g \oplus g^3, \\
& g^3 \otimes X \cong X', \quad g^3 \otimes X' \cong X, \quad X'
\otimes X' \cong g \oplus g^3. \end{align*} See \cite{bonderson, liptrap}. Observe that the category $\D_{ad}$ is generated by the simple objects $\1$ and $g^2$, and it is equivalent to the category $\C(\Zz_2)$ of finite dimensional $\Zz_2$-graded vector spaces.

\medbreak
Let $\C = \Z(\D)$ be the Drinfeld center of $\D$, so that $\C$ is a non-degenerate braided fusion category of Frobenius-Perron dimension $(\FPdim \C)^2 = 64$, which is not integral.
It follows from \cite{center-gdd} that $\C$ contains a Tannakian subcategory $\E \cong \Rep U(\D)$ such that $\C_{U(\D)}^0 \cong \Z(\D_{ad}) \cong \Rep D(\Zz_2)$, where $D(\Zz_2)$ is the Drinfeld double of the group $\Zz_2$.
Since $U(\D)$ is of order $4$ and $\C$, not being integral, cannot contain Tannakian subcategories of dimension $8$, then $\E$ is a maximal Tannakian subcategory of $\C$.

Therefore the core of $\C$ coincides in this example with $\Rep D(\Zz_2)$. In particular, since $D(\Zz_2)$ is commutative, then the core of $\C$ is a pointed braided fusion category.
\end{example}
	
\begin{definition} Let $\C$ be a braided fusion category and let $\Gamma$ be a subgroup of the group $\Aut_{br}\C$ of braided autoequivalences of $\C$.
A fusion subcategory $\D$ of $\C$ will be called \emph{$\Gamma$-stable}  if $\sigma(\D) = \D$, for all $\sigma \in \Gamma$. If $\D$ is is $\Aut_{br}\C$-stable, then $\D$ will be called a  \emph{characteristic} fusion subcategory.

We shall say that $\C$ is \emph{$\Gamma$-anisotropic} if it contains no proper $\Gamma$-stable Tannakian subcategories.
\end{definition}

\begin{remark}\label{ppties} (i) It is clear that for every subgroup $\Gamma \subseteq \Aut_{br}\C$, every $\Gamma$-anisotropic braided fusion category is weakly anisotropic. Suppose that $\Gamma = \{ \id_\C\}$. Then a $\Gamma$-anisotropic braided fusion category is exactly an anisotropic braided fusion category in the terminology of \cite{DGNOI}.
	
\medbreak (ii) Let $\C$ be a weakly anisotropic braided fusion category. Suppose that $\D \subseteq \C$ is a characteristic fusion subcategory of $\C$. Then $\Gamma = \Aut_{br}\C$ is a subgroup of $\Aut_{br}\D$ and $\D$ is $\Gamma$-anisotropic; in particular,  $\D$ is also weakly anisotropic; see \cite[Lemma 5.26]{DGNOI}.

\medbreak
(iii) Let $\C$ be a braided fusion category and let $\Gamma$ be a subgroup of $\Aut_{br}\C$. If $\D$ is a $\Gamma$-stable fusion subcategory of $\C$, then the fusion subcategories $\D_{ad}$, $\D_{pt}$, $\D_{int}$, $\D'$ and $\D\cap \D'$ are also $\Gamma$-stable. In particular, the fusion subcategories $\C_{ad}$, $\C_{pt}$, $\C_{int}$, $\C'$, as well as their M\" uger centers, are characteristic subcategories of $\C$.
\end{remark}

\begin{lemma}{\cite[Lemma 5.27]{DGNOI}.}\label{wa-symm} Let $\C$ be a symmetric fusion category and suppose that $\C$ is weakly anisotropic. Then $\C \cong \vect$ or $\C \cong \svect$. \qed
\end{lemma}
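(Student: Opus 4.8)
The plan is to exploit the \emph{uniqueness} of the maximal Tannakian subcategory of a symmetric fusion category. By Deligne's theorem, as recalled above, we may write $\C \cong \Rep(G, u)$ for a finite group $G$ and a central element $u$ with $u^2 = e$, and then $\E = \Rep(G/(u))$ is the \emph{unique} maximal Tannakian subcategory of $\C$. The crucial observation is that this uniqueness forces $\E$ to be a characteristic subcategory. Indeed, let $\sigma \in \Aut_{br}\C$. The restriction of $\sigma$ is a braided equivalence of $\E$ onto $\sigma(\E)$, so $\sigma(\E) \cong \Rep(G/(u))$ is again Tannakian; since $\sigma$ is an equivalence it preserves both inclusions and Frobenius-Perron dimensions of fusion subcategories, so it carries maximal Tannakian subcategories to maximal Tannakian subcategories. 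By uniqueness, $\sigma(\E) = \E$. Hence $\E$ is stable under all braided autoequivalences of $\C$.

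Next I would invoke the hypothesis that $\C$ is weakly anisotropic. By definition, $\C$ contains no nontrivial Tannakian subcategory stable under all braided autoequivalences. Since $\E$ is precisely such a characteristic Tannakian subcategory, it must be trivial, that is, $\E \cong \vect$.

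Finally I would read off the conclusion from the group-theoretic description. The equivalence $\E = \Rep(G/(u)) \cong \vect$ means that $G/(u)$ is the trivial group, so $G = (u)$. There are only two possibilities: if $u = e$ then $G$ is trivial and $\C \cong \Rep(\{e\}) \cong \vect$; if $u$ has order $2$ then $G \cong \Zz_2$ and $\C \cong \Rep(\Zz_2, u) \cong \svect$. (Equivalently, one may argue by dimension: since $\E \cong \vect$, the structure recalled above gives $\FPdim \C \leq 2$, and if $\FPdim \C = 2$ then $\C$ cannot be Tannakian — otherwise $\C$ itself would be a nontrivial characteristic Tannakian subcategory — so a symmetric non-Tannakian category of Frobenius-Perron dimension $2$ must be $\svect$.) In either case $\C \cong \vect$ or $\C \cong \svect$, as desired.

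The argument is essentially self-contained given the facts recalled in Subsection~\ref{2.4}; the single point that requires care is the claim that $\E$ is characteristic, which rests on combining the uniqueness of the maximal Tannakian subcategory with the fact that braided autoequivalences preserve Tannakian subcategories together with their Frobenius-Perron dimensions. I do not expect any genuine obstacle beyond correctly isolating this uniqueness argument and handling the boundary dimension $2$ so as to distinguish $\svect$ from $\Rep \Zz_2$.
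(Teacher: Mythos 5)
Your argument is correct, and since the paper simply quotes this statement from \cite[Lemma 5.27]{DGNOI} without reproducing a proof, there is nothing in the source to diverge from; your route — writing $\C \cong \Rep(G,u)$ via Deligne's theorem, observing that the unique maximal Tannakian subcategory $\Rep(G/(u))$ is therefore characteristic and hence trivial by weak anisotropy, and concluding $G=(u)$ so that $\C\cong\vect$ or $\C\cong\svect$ — is exactly the standard argument behind the cited lemma. The one step worth stating carefully, as you do, is that a braided autoequivalence carries Tannakian subcategories to Tannakian subcategories and preserves the lattice of fusion subcategories, so it fixes the unique maximal one.
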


\begin{proposition}\label{center-st} Let $\C$ be a weakly anisotropic braided fusion category and let $\D$ be a characteristic fusion subcategory of $\C$. Then either $\D$ is non-degenerate or $\D \cap \D' \cong \svect$. In particular, $\C$ is either non-degenerate or slightly degenerate.
\end{proposition}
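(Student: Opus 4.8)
The plan is to reduce everything to the M\"uger center of $\D$, which is precisely the fusion subcategory $\D \cap \D'$, and to show that this subcategory is a symmetric, weakly anisotropic braided fusion category; Lemma \ref{wa-symm} will then force it to be $\vect$ or $\svect$, which is exactly the dichotomy we want.

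First I would record that $\D \cap \D'$ is symmetric. By definition its objects lie in $\D$ and centralize every object of $\D$, so in particular any two of them centralize each other. Thus $\D \cap \D'$ is nothing but the M\"uger center of $\D$ regarded as a braided fusion category in its own right, and $\D$ is non-degenerate exactly when $\D \cap \D' \cong \vect$. This reformulation turns the statement into the assertion that the symmetric category $\D \cap \D'$ is either $\vect$ or $\svect$.

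Next I would establish that $\D \cap \D'$ is itself a characteristic subcategory of $\C$. Taking $\Gamma = \Aut_{br}\C$ and using that $\D$ is characteristic, hence $\Gamma$-stable, Remark \ref{ppties}(iii) gives that $\D'$ and therefore $\D \cap \D'$ are $\Gamma$-stable, i.e. characteristic. Since $\C$ is weakly anisotropic and $\D \cap \D'$ is characteristic, Remark \ref{ppties}(ii) applies and shows that $\D \cap \D'$ is again weakly anisotropic. At this point $\D \cap \D'$ is a symmetric fusion category which is weakly anisotropic, so Lemma \ref{wa-symm} yields $\D \cap \D' \cong \vect$ or $\D \cap \D' \cong \svect$; in the first case $\D$ is non-degenerate, and the second case is the remaining alternative.

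For the final assertion I would simply take $\D = \C$, which is tautologically characteristic since $\sigma(\C) = \C$ for every $\sigma \in \Aut_{br}\C$; then $\D \cap \D' = \C'$ and the dichotomy just proved says exactly that $\C' \cong \vect$ or $\C' \cong \svect$, that is, $\C$ is non-degenerate or slightly degenerate. The argument is a direct assembly of the cited results, so there is no computational obstacle; the only points requiring care are the identification of $\D \cap \D'$ with the M\"uger center of $\D$ and the verification that both being characteristic and being weakly anisotropic are inherited by this subcategory, all of which are supplied by Remark \ref{ppties} and Lemma \ref{wa-symm}.
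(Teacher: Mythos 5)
Your proof is correct and follows exactly the paper's argument: identify $\D \cap \D'$ as the symmetric M\"uger center of $\D$, observe it is characteristic hence weakly anisotropic via Remark \ref{ppties}, and apply Lemma \ref{wa-symm}, with the final assertion obtained by taking $\D = \C$. No discrepancies to report.
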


\begin{proof} The category $\B = \D \cap \D'$ is the M\" uger center of $\D$, and it is a symmetric subcategory of $\C$. Since $\D$ is a characteristic subcategory, then so is $\B$. Therefore $\B$ must be weakly anisotropic. By Lemma \ref{wa-symm}, we obtain that $\B \cong \vect$ or $\B \cong \svect$. This implies the proposition.
\end{proof}

Since a braided fusion category of odd integer Frobenius-Perron dimension cannot be slightly degenerate, as a consequence of Proposition \ref{center-st}, we obtain:

\begin{corollary}\label{core-odd} Let $\C$ be a weakly anisotropic braided fusion category such that $\FPdim \C$ is an odd integer. Then $\C$ is non-degenerate. \qed
\end{corollary}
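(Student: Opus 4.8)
The plan is to read off the result directly from Proposition \ref{center-st}, combined with the parity obstruction coming from Frobenius-Perron dimensions. Since every braided fusion category is trivially a characteristic fusion subcategory of itself, I would apply Proposition \ref{center-st} with $\D = \C$. Because $\C' \subseteq \C$, one has $\C \cap \C' = \C'$, so the proposition yields exactly the dichotomy already recorded in its ``in particular'' clause: either $\C$ is non-degenerate, or $\C$ is slightly degenerate, the latter meaning $\C' \cong \svect$.

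It then remains only to exclude the slightly degenerate alternative under the hypothesis that $\FPdim \C$ is odd. Here I would invoke the standard fact that the Frobenius-Perron dimension of any fusion subcategory divides the Frobenius-Perron dimension of the ambient category. If $\C$ were slightly degenerate, then $\C' \cong \svect$ would be a fusion subcategory with $\FPdim \C' = 2$, forcing $2 \mid \FPdim \C$ and contradicting the assumption that $\FPdim \C$ is odd. Hence the slightly degenerate case cannot occur, and $\C$ must be non-degenerate. The only nontrivial ingredient is the divisibility of Frobenius-Perron dimensions along fusion subcategories, which is a well-known general property of fusion categories; there is no genuine obstacle here, as the corollary is simply a specialization of Proposition \ref{center-st} to the case $\D = \C$ with the even-dimension possibility ruled out on arithmetic grounds.
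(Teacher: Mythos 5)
Your proof is correct and follows essentially the same route as the paper: the paper derives this corollary directly from Proposition \ref{center-st} together with the observation that a braided fusion category of odd integer Frobenius--Perron dimension cannot be slightly degenerate, which is exactly your divisibility argument ruling out $\C' \cong \svect$. Your write-up merely makes explicit the standard fact (that $\FPdim$ of a fusion subcategory divides $\FPdim$ of the ambient category) which the paper leaves implicit.
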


\begin{corollary}\label{core-odd-wgt} Let $\C$ be a braided fusion category of odd integer Frobenius-Perron dimension. Then the core of $\C$ is a non-degenerate braided fusion category.
\end{corollary}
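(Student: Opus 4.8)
The plan is to reduce everything to Corollary \ref{core-odd}, which already settles the non-degeneracy of a weakly anisotropic braided fusion category of odd integer Frobenius-Perron dimension. Since the core of $\C$ is weakly anisotropic by \cite[Corollaries 5.19 and 5.15]{DGNOI}, the only point that needs verification is that the core itself again has \emph{odd integer} Frobenius-Perron dimension; once this is in place, Corollary \ref{core-odd} applies verbatim to the core and yields the assertion.

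To carry this out, I would fix a maximal Tannakian subcategory $\E \cong \Rep G$ of $\C$, so that by definition the core is the neutral homogeneous component $\C^0_G$ of the de-equivariantization $\C_G$. First I would note that $|G| = \FPdim \E$ divides $\FPdim \C$: indeed $\E$ is a fusion subcategory, so $\FPdim \C / \FPdim \E$ is an algebraic integer, and being a ratio of two integers it is an integer. As $\FPdim \C$ is odd, $|G|$ is odd, and since de-equivariantization divides the Frobenius-Perron dimension by $\FPdim \E$, the category $\C_G$ has odd integer Frobenius-Perron dimension $\FPdim \C_G = \FPdim \C / |G|$.

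Next I would pass from $\C_G$ to its neutral component. Writing $H = \{ g \in G : (\C_G)_g \neq 0 \}$ for the support of the $G$-grading on $\C_G$, which is a subgroup of $G$, the grading $\C_G = \oplus_{h \in H} (\C_G)_h$ is faithful; hence all its homogeneous components have equal Frobenius-Perron dimension and $\FPdim \C^0_G = \FPdim \C_G / |H|$. This last quantity is rational and an algebraic integer (being the Frobenius-Perron dimension of the fusion category $\C^0_G$), hence an integer, and from $\FPdim \C_G = |H| \, \FPdim \C^0_G$ with $\FPdim \C_G$ odd we conclude that $\FPdim \C^0_G$ is odd. Thus the core is a weakly anisotropic braided fusion category of odd integer Frobenius-Perron dimension, and Corollary \ref{core-odd} finishes the proof.

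The argument is mostly bookkeeping with Frobenius-Perron dimensions, and I expect the only delicate point to be the passage through the (possibly non-faithful) $G$-grading on $\C_G$: one must use that the support of a grading on a fusion category is a subgroup and that faithful gradings have equidimensional components, so that the odd integer $\FPdim \C_G$ is divided by the odd integer $|H|$ and remains an odd integer. Note that no non-degeneracy hypothesis on $\C$ enters, so the conclusion holds for an arbitrary braided fusion category of odd integer Frobenius-Perron dimension.
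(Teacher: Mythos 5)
Your proposal is correct and takes the same route as the paper: the paper's proof simply asserts that the core inherits the odd integer Frobenius--Perron dimension and then invokes Corollary \ref{core-odd}, exactly as you do. Your version merely fills in the bookkeeping (divisibility of $\FPdim\C$ by $|G|$, and the passage through the support subgroup $H$ of the grading on $\C_G$) that the paper leaves implicit, and all of those steps are sound.
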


\begin{proof} Since $\C$ has odd integer Frobenius-Perron dimension, then so does its core. The statement follows from Corollary \ref{core-odd}.
\end{proof}

\section{Tannakian subcategories of an integral weakly group-theoretical braided fusion category}\label{prev-core}

The main results of this section assert the  existence of nontrivial Tannakian subcategories in certain integral weakly group-theoretical braided fusion categories.
	
\begin{theorem}\label{wgt-tann} Let $\C$ be an integral  braided fusion category. Suppose that $\C$ is weakly group-theoretical. Then either $\C$ is pointed or it contains a nontrivial Tannakian subcategory.
\end{theorem}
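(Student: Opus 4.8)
The plan is to argue by induction on $\FPdim \C$, reducing via the Müger center to the non-degenerate or slightly degenerate case and via the adjoint subcategory to categories of prime power Frobenius-Perron dimension. Throughout I assume $\C$ is not pointed and seek a nontrivial Tannakian subcategory.

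First I would control the symmetric center. The Müger center $\C'$ is a symmetric fusion subcategory, so $\C' \cong \Rep(G,u)$ for some finite group $G$ and central element $u$ of order $2$. If $\FPdim \C' > 2$, then by the structure of symmetric categories recalled in the preliminaries, $\E_0 \cong \Rep(G/(u))$ is a nontrivial Tannakian subcategory of $\C' \subseteq \C$, and we are done. If $\FPdim \C' = 2$, then either $\C' \cong \Rep \Zz_2$, which is already Tannakian, or $\C' \cong \svect$; and if $\FPdim \C' = 1$ then $\C$ is non-degenerate. Hence it suffices to treat the cases where $\C$ is non-degenerate or slightly degenerate.

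Next I would descend through the adjoint subcategory. Since $\C$ is not pointed, $\C_{ad} \neq \vect$. If $\C_{ad} \subsetneq \C$ and $\C_{ad}$ is not pointed, then $\C_{ad}$ is a braided, integral, weakly group-theoretical fusion category that is not pointed and has strictly smaller Frobenius-Perron dimension, so by induction it contains a nontrivial Tannakian subcategory, which also lies in $\C$. If instead $\C_{ad}$ is pointed, then $\C^{(2)} = (\C_{ad})_{ad} \cong \vect$, so $\C$ is nilpotent; by the structure theorem for braided nilpotent fusion categories \cite{DGNOI}, $\C$ is a Deligne product of braided fusion categories of prime power dimension, and since $\C$ is not pointed one of these factors, a fusion subcategory of $\C$, is an integral braided category of prime power dimension that is not pointed. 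This reduces the nilpotent case to the prime power case. Here the integrality hypothesis does essential work: an \emph{anisotropic} pointed adjoint subcategory (a semion, or its odd analogue) cannot occur inside an integral, non-pointed category of prime power dimension, since a Frobenius-Perron dimension count on the homogeneous components of the universal grading forces the non-neutral parts to carry objects of dimension greater than $1$, contradicting the prime power constraint.

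The remaining and hardest configuration is the perfect non-degenerate case $\C = \C_{ad}$, equivalently $\C_{pt} \cong \vect$, where the grading induction offers no proper subcategory to descend into; the category $\Rep D(A_5) = \Z(\vect_{A_5})$ shows this case genuinely occurs and that the desired Tannakian subcategory (here $\Rep A_5$) need not arise from the adjoint series. The main obstacle is to produce a Tannakian subcategory intrinsically in this case, and I would resolve it using the weakly group-theoretical hypothesis structurally: by \cite{ENO2} such a $\C$ is obtained from $\vect$ by a sequence of extensions and equivariantizations, and its realization as the neutral component of a $G$-crossed braided center of a $G$-extension plants a canonical Tannakian copy of $\Rep G$ inside $\C$. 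Verifying that this mechanism supplies a nontrivial Tannakian subcategory in the perfect non-degenerate case, and that the slightly degenerate case threads through the same induction without the fermion $\svect$ blocking the descent into $\C_{ad}$, is where the finer structure theory of weakly group-theoretical braided categories and of braided categories of prime power dimension is needed.
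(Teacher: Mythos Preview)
Your reduction via the M\"uger center and the descent into $\C_{ad}$ when it is a proper, non-pointed subcategory are sound and match the paper's inductive shape. Two places, however, do not go through as written.

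In the nilpotent case (when $\C_{ad}$ is pointed), your dimension-count argument for the prime-power factors is not correct: if $\C$ is integral of dimension $p^n$ with $\C_{ad}$ pointed of order $p^k$, the homogeneous components of the universal grading each have dimension $p^k$ and may perfectly well contain simple objects of dimension $p^j$ with $j\geq 1$; there is no contradiction with the ``prime power constraint''. The paper bypasses this entirely: once $\C$ is integral and nilpotent, it is group-theoretical by \cite[Theorem~6.10]{dgno-nilpotent}, and a non-pointed braided group-theoretical category contains a nontrivial Tannakian subcategory by \cite[Lemma~5.1]{witt-wgt}. No Deligne decomposition or prime-power analysis is needed.

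The genuine gap is the perfect case $\C=\C_{ad}$. Your suggestion that the weakly group-theoretical tower ``plants a canonical Tannakian copy of $\Rep G$ inside $\C$'' is not right: the Tannakian $\Rep G_i$ produced by \cite[Proposition~4.2]{ENO2} lives in $\Z(\C_i)$, not in $\C$, and the example $\C=\Z(\Rep A_5)$ you cite is precisely a center, not a general braided $\C$. The paper's argument is different and uses both hypotheses simultaneously. One embeds $\C$ into $\Z(\C)=\Z(\C_n)$ via the canonical braided embedding. The assumption that $\C$ has no nontrivial Tannakian subcategory forces $\C\cap\E_n=\vect$, so the de-equivariantization functor $F_n$ restricts to a full embedding of $\C$ into $\Z(\C_n)_{G_n}$; the assumption $\C_{ad}=\C$ (no faithful grading) then forces the image into the neutral component $\Z(\C_n)_{G_n}^0\cong\Z(\C_{n-1})$, and \cite[Proposition~4.2]{jh-wgt} shows this embedding is braided. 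Iterating down the tower eventually embeds $\C$ into $\Z(\C_0)=\vect$, a contradiction. This iterative descent through the centers, alternating the ``no Tannakian'' and ``no grading'' hypotheses, is the missing mechanism in your proposal.
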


\begin{proof} The proof is by induction on $\FPdim \C$. We may assume that $\C$ is not pointed and not Tannakian.
	
Suppose first that $\C_{ad} \subsetneq \C$. Since $\C_{ad}$ is weakly group-theoretical and integral, then by induction either it contains a nontrivial Tannakian subcategory, whence so does $\C$, or it is pointed. If $\C_{ad}$ is pointed, then $\C$ is nilpotent and since it is integral, then it is group-theoretical by \cite[Theorem 6.10]{dgno-nilpotent}. Since $\C$ is not pointed, it follows from \cite[Lemma 5.1]{witt-wgt} that $\C$ contains a nontrivial Tannakian subcategory.	

\medbreak We may then assume that $\C_{ad} = \C$, in other words, $\C$ admits no faithful group grading.

By \cite[Proposition 4.2]{ENO2}, there exist a series of fusion categories
\begin{equation}\label{cs-wgt}\vect = \C_0, \C_1, \dots , \C_n = \C,\end{equation}
and a series of finite groups $G_1, \dots, G_n$, such that, for all $1 \leq i \leq n$, the Drinfeld center $\Z(\C_i)$ contains a Tannakian subcategory $\E_i \cong \Rep G_i$ and there is an equivalence of braided fusion categories $(\E_i')_{G_i} \cong \Z(\C_{i-1})$, where $(\E_i')_{G_i}$ is the  de-equivariantization of the M\" uger centralizer $\E_i'$ in $\Z(\C_i)$ by $G_i$.

In particular, for all $i = 1, \dots, n$, $\Z(\C_i)_{G_i}$ is a braided $G_i$-crossed fusion category with neutral component $\Z(\C_i)_{G_i}^0 \cong \Z(\C_{i-1})$ and there is an equivalence of fusion categories $\Z(\C_i) \cong (\Z(\C_i)_{G_i})^{G_i}$. Let $F_i: \Z(\C_i) \to \Z(\C_i)_{G_i}$ denote the canonical tensor functor, so that $F_i$ is a normal tensor functor and $\mathfrak{Ker}_{F_i} = \E_i$.

\medbreak Suppose on the contrary that $\C$ contains no nontrivial Tannakian subcategory. Let us regard $\C$ as a fusion subcategory of $\Z(\C)$, by means of the canonical embedding $\C \to \Z(\C)$. Since $\C \cap \E_n$ is a Tannakian subcategory of $\C$, then $\C \cap \mathfrak{Ker}_{F_n} = \C \cap \E_n = \vect$. Therefore, by \cite[Lemma 2.3]{jh-wgt}, the restriction $F_n\vert_{\C}:\C \to \Z(\C_n)_{G_n}$ induces an equivalence of tensor categories $\C \cong F_n(\C) \subseteq \Z(\C_n)_{G_n}$.

Since by assumption $\C$ admits no faithful group grading, then $F_n(\C) \subseteq \Z(\C_n)_{G_n}^0 \cong \Z(\C_{n-1})$ (see Remark \ref{gding-sub}).
Hence, by \cite[Proposition 4.2]{jh-wgt}, $\C \subseteq (\E_n)'$ and $F_n$ induces by restriction an embedding of braided fusion categories $\C \to \Z(\C_n)_{G_n}^0 \cong \Z(\C_{n-1})$.

Iterating this argument, we obtain an embedding of braided fusion categories $\C \to \Z(\C_1)_{G_1}^0 \cong \Z(\C_0) = \vect$, which is a contradiction. The contradiction shows that $\C$ must contain a nontrivial Tannakian subcategory, as claimed. \end{proof}

\begin{corollary}\label{wgt-pt} Let $\C$ be an integral weakly group-theoretical braided fusion category and let $\Gamma$ be a subgroup of $\Aut_{br}\C$.
Suppose that $\C$ is $\Gamma$-anisotropic. Then either $\C$ is pointed or it contains a Tannakian subcategory of prime dimension. In particular, if $\C$ is not trivial, then
$\C_{pt} \neq \vect$.
\end{corollary}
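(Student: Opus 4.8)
The plan is to reduce the statement to the existence of a single nontrivial Tannakian subcategory, which Theorem~\ref{wgt-tann} already supplies, and then to refine its associated group until it admits a quotient of prime order. First I would dispose of the easy cases: if $\C$ is pointed there is nothing to prove, and if moreover $\C$ is nontrivial then $\C_{pt} = \C \neq \vect$, so the final assertion holds. Assume henceforth that $\C$ is not pointed. Since $\C$ is integral and weakly group-theoretical, Theorem~\ref{wgt-tann} produces a nontrivial Tannakian subcategory $\E \cong \Rep G$ with $G \neq 1$. I would also record at this stage that, once a Tannakian subcategory of prime dimension is found, the ``in particular'' follows immediately: such a subcategory is $\Rep \Zz_p$, which is pointed because $\Zz_p$ is abelian, so all of its objects are invertible and $\C_{pt} \supseteq \Rep \Zz_p \neq \vect$.

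The next step is to pass to a minimal nontrivial Tannakian subcategory $\E_0 \cong \Rep S$, which exists because $\Irr(\C)$ is finite. As fusion subcategories of $\Rep S$ correspond to quotient groups of $S$, minimality forces $S$ to be simple, and $\FPdim \E_0 = |S|$ is prime precisely when $S \cong \Zz_p$; in that case $\E_0$ is the desired subcategory. The entire difficulty is thus to exclude the possibility that $S$ is nonabelian simple, equivalently that $\C$ admits no Tannakian subcategory $\Rep H$ with $H$ non-perfect (a nontrivial abelianization $H^{ab}$ would at once furnish a copy of $\Rep \Zz_p$). This is exactly the point at which the hypothesis of $\Gamma$-anisotropy must intervene.

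The mechanism I would exploit is that simplicity of $S$ rigidifies the mutual braiding of the $\Gamma$-translates of $\E_0$. For $\sigma, \tau \in \Gamma$ the intersection $\sigma(\E_0) \cap \tau(\E_0)'$ is a fusion subcategory of $\sigma(\E_0) \cong \Rep S$, hence equals either $\sigma(\E_0)$ or $\vect$; that is, any two translates either centralize each other or meet centrally only in the unit. In the first alternative, holding for all pairs, the join $\D = \bigvee_{\sigma \in \Gamma} \sigma(\E_0)$ is a join of pairwise centralizing symmetric subcategories, so $\D \subseteq \D'$ is itself symmetric, and being generated by Tannakian subcategories it is again Tannakian; since $\D$ is $\Gamma$-stable and nontrivial this contradicts $\Gamma$-anisotropy, ruling out nonabelian simple $S$. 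The hard part, which I expect to be the main obstacle, is the remaining alternative, in which some translate $\sigma(\E_0)$ fails to centralize $\E_0$. Here I would argue by induction on $\FPdim \C$: using that $\C$ is non-degenerate or slightly degenerate (Proposition~\ref{center-st}) I would de-equivariantize by $\E_0$, as in Subsection~\ref{2.4}, to obtain an integral weakly group-theoretical braided fusion category of strictly smaller Frobenius--Perron dimension, verify that the induced group of braided autoequivalences keeps it anisotropic in the relevant sense, and apply the inductive hypothesis. The delicate points are the transport of $\Gamma$-anisotropy through de-equivariantization and the recovery from the smaller category of a genuinely prime-dimensional Tannakian subcategory of $\C$; controlling these is where the proof will require the most care.
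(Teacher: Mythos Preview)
Your overall architecture---take a minimal Tannakian $\E_0 \cong \Rep S$ with $S$ simple, then analyze how the $\Gamma$-translates of $\E_0$ brace against each other---matches the paper's. Your treatment of the ``all translates pairwise centralize'' case is also essentially the paper's (the paper builds the join iteratively as $\E \boxtimes \sigma_1(\E) \boxtimes \cdots$ via M\"uger's tensor decomposition, but the endpoint is the same $\Gamma$-stable Tannakian subcategory contradicting $\Gamma$-anisotropy).

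The genuine gap is in the non-centralizing case. You propose an induction on $\FPdim \C$ by de-equivariantizing by $\E_0$, and you correctly flag that transporting $\Gamma$-anisotropy to the quotient and lifting a prime Tannakian subcategory back up are ``delicate''. In fact these steps are not needed at all, and it is unclear how one would carry them out: $\C_S$ is only $S$-crossed braided, the relevant braided piece $\C_S^0$ carries an action of $S$ rather than of $\Gamma$, and there is no obvious mechanism to pull a Tannakian $\Rep\Zz_p \subseteq \C_S^0$ back to one in $\C$. The paper's argument here is direct and avoids induction entirely. One still passes to $F:\C \to \C_S$, but only to exploit the $S$-grading on $\C_S$: since $\E_0 \cap \sigma(\E_0) = \vect$, the restriction $F\vert_{\sigma(\E_0)}$ is a tensor equivalence onto its image, and since $\E_0' = F^{-1}(\C_S^0)$, the condition $\sigma(\E_0) \cap \E_0' = \vect$ forces $F\sigma(\E_0)$ to have trivial neutral component under the induced grading (Remark~\ref{gding-sub}). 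A faithfully graded fusion category with trivial neutral component is pointed; but $F\sigma(\E_0) \cong \Rep S$, so $S$ is abelian, hence $S \cong \Zz_p$. This is the missing idea: the grading on the de-equivariantization immediately forces $\Rep S$ to be pointed, with no recursion required. Your appeal to Proposition~\ref{center-st} is then unnecessary.
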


\begin{proof} Assume $\C$ is not pointed. By Theorem \ref{wgt-tann}, $\C$ contains a nontrivial Tannakian subcategory, in particular $\Gamma \neq \{\id_\C \}$. Let $\E$ be a nontrivial Tannakian subcategory of $\C$ of minimal dimension. Then $\E$ does not contain any proper fusion subcategory, and therefore the same holds for all its $\Gamma$-conjugates. In particular $\E \cong \Rep G$, where $G$ is a finite simple group.
	
\medbreak
Since $\C$ is $\Gamma$-anisotropic there exists $\sigma \in \Gamma$ such that $\sigma(\E) \neq \E$, whence $\E \cap \sigma(\E) \cong \vect$, because $\E$ contains no proper fusion subcategories.

Consider the canonical normal tensor functor  $F: \C \to \C_G$, so that $\KER_F = \E$.  As $\E \cap \sigma(\E) \cong \vect$, it follows from \cite[Lemma 2.3]{jh-wgt} that $F$ induces by restriction an equivalence of fusion categories $\sigma(\E) \cong F\sigma(\E)$.

\medbreak Since $\sigma(\E)$ contains no proper fusion subcategories neither, then either $\sigma(\E) \cap \E' \cong \vect$ or $\sigma(\E) \subseteq \E'$.
Note that $\E' = F^{-1}(\C_G^0)$, so that the first possibility implies that $F\sigma(\E)$ is faithfully graded by a nontrivial subgroup of $G$, and therefore $F\sigma(\E)$ must be pointed of prime dimension. Hence the same holds for $\E \cong F\sigma(\E)$ and the lemma follows in this case.

\medbreak We may thus assume that $\sigma(\E) \subseteq \E'$. Then \cite[Proposition 7.7]{muegerII} implies that $\E \vee \sigma(\E) \cong \E \boxtimes \sigma(\E)$ as braided fusion categories. In particular, $\E \vee \sigma(\E) \cong \Rep (G \times G)$ is a Tannakian subcategory of $\C$.

\medbreak Since $\C$ is $\Gamma$-anisotropic, then the Tannakian subcategory $\E \vee \sigma(\E)$ cannot be $\Gamma$-stable and thus  there must exist $\sigma_2 \in \Gamma$ such that $\sigma_2(\E)$ is not contained in $\E \vee \sigma(\E)$.
Since $\sigma_2(\E) \cong \E$ contains no proper fusion subcategories, then  $\sigma_2(\E) \cap (\E \vee \sigma(\E)) \cong \vect$. As before, we have that $\sigma_2(\E) \cap (\E \vee \sigma(\E))' \cong \vect$ or $\sigma_2(\E) \subseteq (\E \vee \sigma(\E))'$.
The first possibility implies that $\sigma_2(\E)$ is faithfully graded by a nontrivial subgroup of $G\times G$, whence $\sigma_2(\E)$ is pointed of prime dimension, and the second possibility implies that $\E \vee \sigma(\E) \vee \sigma_2(\E) \cong \E \boxtimes \sigma(\E)\boxtimes \sigma_2(\E)$ as braided fusion categories and in particular $\E \vee \sigma(\E) \vee \sigma_2(\E) \cong \Rep (G \times G \times G)$ is a Tannakian subcategory of $\C$.

\medbreak
Assume that $\C$ contains no Tannakian subcategory of prime dimension. In view of the finiteness of the dimension of $\C$, continuing this process we find elements $\sigma = \sigma_1, \dots, \sigma_n$ of $\Gamma$ such that $$\E \vee \sigma_1(\E) \vee \dots \vee  \sigma_n(\E) \cong \E \boxtimes \sigma_1(\E)\boxtimes \dots \boxtimes \sigma_n(\E)$$ is a Tannakian subcategory of $\C$ and $\tau(\E) \subseteq \E \vee \sigma_1(\E) \vee \dots \vee  \sigma_n(\E)$, for all $\tau \in \Gamma$. This implies that $\E \vee \sigma_1(\E) \vee \dots \vee  \sigma_n(\E)$ is a $\Gamma$-stable (nontrivial) Tannakian subcategory, which contradicts the assumption on $\C$.

The contradiction comes from the assumption that $\C$ contains no Tannakian subcategory of prime dimension. Then we conclude that such a subcategory must exist and the lemma follows.	
\end{proof}

We point out that the conclusion of Corollary \ref{wgt-pt} may fail if $\C$ is not weakly anisotropic, as the following example shows.

\begin{example} Let $G$ be a finite nonabelian simple group $G$ and let $\C = \Z(\Rep G)$ be the Drinfeld center of the category $\Rep G$. The category $\C$ is integral weakly group-theoretical, and we have $\C_{pt} = \vect$.
	
Regard $\E = \Rep G$ as a Tannakian subcategory of $\C$ under the canonical embedding $\Rep G \to \Z(\Rep G)$. Then the de-equivariantization $\C_G$ is a pointed fusion category (equivalent to the category of finite-dimensional $G$-graded vector spaces).
	
In this example the category $\E$ is the unique nontrivial Tannakian subcategory of $\C$. In fact, if $\B \neq \E$ is another nontrivial Tannakian subcategory, then $\E \cap \B$ is a Tannakian subcategory of $\E$ implying that $\E \cap \B = \vect$ (because, $G$ being simple, the category $\Rep G$ contains no proper fusion subcategories). Let $F: \C \to \C_G$ denote the canonical normal tensor functor. It follows from \cite[Proposition 4.2]{jh-wgt}, that $F$ induces by restriction an equivalence of fusion categories between $\B$ and a  fusion subcategory of $\C_G$. Hence $\B$ must be pointed, which is a contradiction.
This shows that $\E$ is the unique Tannakian subcategory of $\C$, as claimed. 	
In particular, $\E$ is stable under all braided auto-equivalences of $\C$. This shows that $\C$ is not weakly anisotropic. 	
\end{example}

\section{The core of a weakly group-theoretical braided fusion category}\label{core}

The main goal of this section is to give a proof of Theorem \ref{core-wgt}. Our first two theorems regard the structure of weakly group-theoretical weakly anisotropic braided fusion categories.
	
\begin{theorem}\label{wa-wgt-int} Let $\C$ be a weakly group-theoretical integral braided fusion category such that $\C$ is weakly anisotropic. Then $\C$ is pointed.
\end{theorem}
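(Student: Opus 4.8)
My plan is to argue by induction on $\FPdim\C$, splitting according to whether the adjoint subcategory $\C_{ad}$ equals $\C$ or is a proper subcategory. Throughout I would use that $\C_{ad}$, $\C_{pt}$ and the M\"uger center $\C'$ are characteristic subcategories (Remark \ref{ppties}), together with the standard fact that in any braided fusion category one has $\C_{pt}\subseteq(\C_{ad})'$, equivalently $\C_{ad}\subseteq(\C_{pt})'$: indeed, for an invertible object $g$ and a simple object $X$ the squared braiding $c_{g,X\otimes X^*}c_{X\otimes X^*,g}$ acts by the scalar $\beta(g,X)\beta(g,X^*)=\beta(g,X)\beta(g,X)^{-1}=1$, so $g$ centralizes the generators $X\otimes X^*$ of $\C_{ad}$.

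Suppose first that $\C_{ad}\subsetneq\C$. Then $\C_{ad}$ is a proper characteristic subcategory; it is integral and weakly group-theoretical as a fusion subcategory of $\C$, and weakly anisotropic by Remark \ref{ppties}(ii), with $\FPdim\C_{ad}<\FPdim\C$. By the inductive hypothesis $\C_{ad}$ is pointed, hence $\C_{ad}\subseteq\C_{pt}$; combined with $\C_{ad}\subseteq(\C_{pt})'$ this shows that every object of $\C_{ad}$ centralizes $\C_{ad}$, i.e. $\C_{ad}$ is symmetric. Being characteristic in the weakly anisotropic category $\C$, it is itself weakly anisotropic, so Lemma \ref{wa-symm} gives $\C_{ad}\cong\vect$ or $\C_{ad}\cong\svect$. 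The alternative $\C_{ad}\cong\svect$ is incompatible with integrality, for if $\C$ were not pointed a non-invertible simple object $X$ would satisfy $X\otimes X^*\cong\1\oplus f$ inside $\svect$ (with $f$ the nonunit simple object), forcing $(\FPdim X)^2=2$ in contradiction with $\FPdim X\ge 2$. Therefore $\C_{ad}\cong\vect$, which means that $\C$ is pointed.

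The remaining case $\C_{ad}=\C$ is the one I expect to be the main obstacle, since no faithful grading is available and the inductive hypothesis cannot be fed to $\C_{ad}$. Here $(\C_{ad})'=\C'$, so the inclusion above reads $\C_{pt}\subseteq\C'$, and by Proposition \ref{center-st} the category $\C$ is either non-degenerate or slightly degenerate. If $\C$ is non-degenerate then $\C_{pt}\subseteq\C'\cong\vect$, so $\C_{pt}\cong\vect$ and Corollary \ref{wgt-pt} forces $\C\cong\vect$. If $\C$ is slightly degenerate then $\C_{pt}\subseteq\C'\cong\svect$; assuming $\C$ is not pointed, Corollary \ref{wgt-pt} would supply a Tannakian subcategory of prime dimension sitting inside $\C_{pt}\subseteq\svect$, but the only proper fusion subcategory of $\svect$ is $\vect$ and $\svect$ itself is not Tannakian, so $\svect$ contains no nontrivial Tannakian subcategory, a contradiction. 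In either case $\C$ is pointed, and the induction is complete. The delicate points to verify carefully are the centralizer relation $\C_{pt}\subseteq(\C_{ad})'$, the elimination of the $\svect$ alternative via integrality, and the fact that $\svect$ harbors no nontrivial Tannakian subcategory, which is precisely what makes the slightly degenerate branch collapse.
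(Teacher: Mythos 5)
Your proof is correct, but it is organized quite differently from the paper's. The paper does not split on whether $\C_{ad}=\C$; instead it runs the induction through non-degenerate characteristic subcategories (using M\"uger's decomposition $\C\cong\D\boxtimes\D'$), analyzes the structure of $\C_{pt}$ via \cite[Proposition 2.6]{ENO2} when it is slightly degenerate, shows $(\C_{pt})'$ is pointed, deduces that $\C$ is nilpotent and hence solvable, and finally derives a contradiction by producing (via Theorem \ref{wgt-tann}) a nontrivial Tannakian subcategory of $(\C_0)'$ whose pointed part cannot fit inside $((\C_0)')_{pt}\cong\svect$. Your route replaces all of this with two observations: when $\C_{ad}\subsetneq\C$, the inductive hypothesis makes $\C_{ad}$ pointed, hence $\C_{ad}\subseteq\C_{pt}\cap(\C_{pt})'$ is symmetric, so Lemma \ref{wa-symm} and integrality collapse it to $\vect$ and $\C$ is pointed; and when $\C_{ad}=\C$, the inclusion $\C_{pt}\subseteq(\C_{ad})'=\C'$ caps $\FPdim\C_{pt}$ at $2$ by Proposition \ref{center-st}, after which Corollary \ref{wgt-pt} (a prime-dimension Tannakian subcategory would be pointed, hence land in $\C'\cong\vect$ or $\svect$, neither of which contains one) finishes the argument. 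Both proofs ultimately rest on Corollary \ref{wgt-pt} and on $\C_{ad}\subseteq(\C_{pt})'$ \cite[Corollary 3.26]{DGNOI}, but yours avoids the solvability detour, Theorem \ref{wgt-tann}, and the splitting results for slightly degenerate categories, at the cost of the explicit case analysis on the adjoint subcategory; the paper's version has the advantage of running parallel to its proof of the non-integral case (Theorem \ref{wa-wgt}). The three points you flagged as delicate are all fine: the centralizer relation is exactly the cited DGNO result, the multiplicity count $X\otimes X^*\cong\1\oplus f$ does force $(\FPdim X)^2\le 2$ against integrality, and $\svect$ indeed has no nontrivial Tannakian subcategory since its only nontrivial fusion subcategory is itself.
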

	
\begin{proof} The proof is by induction on $\FPdim \C$. If $\FPdim \C = 1$, there is nothing to prove.  Assume that $\C$ is not trivial and the theorem holds for all braided fusion categories $\D$ such that $\FPdim \D < \FPdim \C$. Suppose first that $\C$ contains a proper non-degenerate characteristic fusion subcategory $\D \neq \vect$. Then  $\D'$ is also characteristic, and thus both $\D$ and $\D'$ must be weakly anisotropic (see Remark \ref{ppties} (iii)). By induction, $\D$ and $\D'$ are pointed. In addition, $\C \cong \D \boxtimes \D'$,  and hence $\C$ is pointed itself.

\medbreak 		
In view of Corollary \ref{wgt-pt}, $\C_{pt} \neq \vect$. Suppose that $\C_{pt}$ is non-degenerate. Since $\C_{pt}$ is characteristic, then we are done by the argument above.
Thus we may assume that $\C_{pt}$ is degenerate and then the M\" uger center $\E$ of $\C_{pt}$ is equivalent to $\svect$, by Proposition \ref{center-st}. Hence $\C_{pt}$ is slightly degenerate and by \cite[Proposition 2.6 (i)]{ENO2}, $\C_{pt} \cong \svect \boxtimes \C_0$ as braided fusion categories, where $\C_0$ is a pointed non-degenerate fusion subcategory.		
Then $\C \cong \C_0 \boxtimes (\C_0)'$ and we have $((\C_0)')_{pt} = (\C_0)' \cap \C_{pt} \cong \svect$.
We may assume that $(\C_0)'$ is not pointed, since otherwise $\C$ is pointed and we are done.

\medbreak
Let $\D = (\C_{pt})'$. Then $\D$ is characteristic and $\D \subseteq (\C_0)'$, because $\C_0$ is pointed. The assumption on $\C$ implies that $\D$ contains no nontrivial Tannakian subcategory stable under the subgroup $\Gamma = \Aut_{br}\C$ of $\Aut_{br}\D$. By Corollary \ref{wgt-pt} we obtain that either $\D$ is pointed or it  contains a Tannakian subcategory of prime dimension. The last possibility cannot hold because every fusion category of prime dimension is pointed, while $\D_{pt} \subseteq ((\C_0)')_{pt} \cong \svect$.
Therefore $\D$ must be pointed.

\medbreak It follows from \cite[Corollary 3.26]{DGNOI} that $\C_{ad} \subseteq (\C_{pt})' = \D$. Then $\C$ is nilpotent and therefore solvable, because it is braided \cite[Proposition 4.5 (iii)]{ENO2}.

\medbreak
Since the non-pointed fusion category $(\C_0)'$ is integral and weakly group-theoretical, it contains a nontrivial Tannakian subcategory $\E \cong \Rep G$, by Theorem \ref{wgt-tann}.
Since $\E$ is solvable, then $G$ is solvable and thus $\E_{pt} \neq \vect$. Hence $\E_{pt}$ is a nontrivial pointed Tannakian subcategory of $(\C_0)'$. This is impossible since $((\C_0)')_{pt} \cong \svect$. This contradiction shows that $\C$ must be pointed and finishes the proof of the theorem. \end{proof}

\begin{theorem}\label{wa-wgt} Let $\C$ be a weakly group-theoretical braided fusion category such that $\C$ is weakly anisotropic. Suppose $\C$ is not integral. Then there is an equivalence of braided fusion categories
$$\C \cong \I \boxtimes \D,$$
where $\I$ is an Ising braided category and $\D$ is a pointed weakly anisotropic braided fusion category.
\end{theorem}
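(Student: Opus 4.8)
The plan is to follow the same strategy as in the integral case (Theorem~\ref{wa-wgt-int}), but to carefully isolate the non-integral behaviour, which by the $E$-grading structure from \cite{gel-nik} must be controlled by an elementary abelian $2$-group and square-free dimension shifts. First I would invoke the canonical faithful $E$-grading $\C = \oplus_{x \in E}\C_x$ with $\C_0 = \C_{int}$, which exists because $\C$ is weakly group-theoretical and hence has $\FPdim \C \in \Zz$. Since $\C$ is not integral, this grading is nontrivial, and the key point will be that some simple object has Frobenius-Perron dimension an irrational square-root; the presence of an object of dimension $\sqrt{2}$ is precisely the signature of an Ising factor. I expect that the $2$-elementary structure of $E$ together with weak anisotropy will force the non-integral part to be exactly one Ising braided category $\I$.

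The central mechanism is the splitting-off argument via non-degenerate characteristic subcategories, exactly as used in the integral case. I would first establish that $\C_{int}$ is a characteristic fusion subcategory (it is stable under all braided autoequivalences by the Remark preceding \ref{gding-sub}), and then analyze its M\" uger center using Proposition~\ref{center-st}: either $\C_{int}$ is non-degenerate, in which case $\C \cong \C_{int} \boxtimes (\C_{int})'$ by \cite[Theorem 4.2]{mueger-str}, reducing the integral factor $\C_{int}$ to a pointed category via Theorem~\ref{wa-wgt-int}, or $\C_{int} \cap (\C_{int})' \cong \svect$. The hope is to show that $(\C_{int})'$, the complement carrying the non-integral objects, is forced to be an Ising braided category: since Ising braided categories are anisotropic and non-degenerate (stated in the introduction, following \cite[Appendix B]{DGNOI}), and since $\FPdim\I = 4$, the only non-integral piece that can survive weak anisotropy is a single Ising factor. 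I would use the classification of Ising braided categories and the fact that $\cd(\C)$ can only contain numbers of the form $\Zz\sqrt{n_x}$ to pin down that the unique non-integral graded component contributes a $\sqrt 2$, hence an Ising structure.

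The hard part will be ruling out the possibility of multiple or "larger" non-integral contributions, that is, showing that the non-integral part is \emph{exactly} one Ising category rather than a tensor product of several Ising categories or something more complicated. Here the weak anisotropy hypothesis is essential: by the Example in Subsection~\ref{ss-core}, the core of $\I_1 \boxtimes \I_2$ is pointed, so a product of two Ising categories is \emph{not} weakly anisotropic as a standalone non-integral piece in the required sense—its symmetric center behaviour collapses. I would argue that if the non-integral factor $\B := (\C_{int})'$ were a product of two or more Ising categories (or contained such a product as a characteristic subcategory), then $\B$ would contain a nontrivial characteristic Tannakian subcategory, contradicting weak anisotropy via Remark~\ref{ppties}(ii) and Proposition~\ref{center-st}. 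This forces $\B \cong \I$ to be a single Ising braided category, and its centralizer $\I' = \D$ is then pointed and weakly anisotropic by Theorem~\ref{wa-wgt-int} applied to the integral complement, giving $\C \cong \I \boxtimes \D$ as desired.

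The subtle obstacle I anticipate is the case where $\C_{int}$ is degenerate with $\svect$ center: then the decomposition does not immediately separate the Ising part from a genuine slightly-degenerate integral remainder, and I would need to combine the $\svect$-splitting (as in the proof of Theorem~\ref{wa-wgt-int}, via \cite[Proposition 2.6(i)]{ENO2}) with a careful count of where the $\sqrt 2$-dimensional object lives, to ensure the fermionic $\svect$ and the Ising factor interact correctly rather than producing a second non-integral component.
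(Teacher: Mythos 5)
There is a genuine gap. Your argument is organized around the dichotomy ``either $\C_{int}$ is non-degenerate, in which case $\C \cong \C_{int} \boxtimes (\C_{int})'$, or $\C_{int} \cap (\C_{int})' \cong \svect$,'' with the first branch carrying the main weight. But the first branch is essentially vacuous here: in the target decomposition $\C \cong \I \boxtimes \D$ one has $\C_{int} = \I_{pt} \boxtimes \D$ with $\I_{pt} \cong \svect$ symmetric inside $\I$, so $\svect$ always sits inside the M\"uger center of $\C_{int}$ and $\C_{int}$ is never non-degenerate in the situations the theorem is about. (Relatedly, if $\C_{int}$ \emph{were} non-degenerate, $(\C_{int})'$ would satisfy $((\C_{int})')_{int} = (\C_{int})' \cap \C_{int} \cong \vect$, so it could not be an Ising category, whose integral part has dimension $2$.) Thus everything is deferred to the case you label a ``subtle obstacle'' and do not resolve.

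The missing idea is the control of $\C_{ad}$ rather than $\C_{int}$. The paper's proof shows, by induction and splitting off non-degenerate characteristic subcategories, that one may assume $\C_{ad} \cong \svect$: since $\C$ is not integral, $\C_{ad}$ is a proper integral characteristic subcategory, hence pointed by Theorem~\ref{wa-wgt-int}; its M\"uger center is $\vect$ or $\svect$ by Proposition~\ref{center-st}; the non-degenerate part $\C_0$ of $\C_{ad}$ is squeezed via $\C_0 \subseteq \C_{ad} \subseteq (\C_{pt})' \subseteq (\C_0)'$ to be trivial. Only then does one get that every non-invertible simple $X$ satisfies $X \otimes X^* \in \svect$, hence $\FPdim X = \sqrt 2$, so $\C$ has generalized Tambara--Yamagami fusion rules and the classification of \cite[Theorem 5.5]{gen-ty} applies (after a final case split on $\C' \cong \vect$ versus $\C' \cong \svect$). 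Your appeal to the $E$-grading only tells you that dimensions lie in $\Zz\sqrt{n_x}$ for square-free $n_x$; it does not exclude simples of dimension $\sqrt 3$, $\sqrt 5$, or $2\sqrt 2$, and your anisotropy argument against $\I_1 \boxtimes \I_2$ (which is salvageable, via the maximal Tannakian subcategory of the $4$-dimensional symmetric category $(\I_1\boxtimes\I_2)_{pt}$) does not address these other possibilities. Without the step $\C_{ad} \cong \svect$ there is no mechanism forcing the non-integral part to be a single Ising factor.
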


\begin{proof} It will be enough to show that $\C \cong \I \boxtimes \D$, as braided fusion categories, where $\I$ is an Ising braided category and $\D$ is a pointed braided fusion category. In this case, $\I_{pt} = \I_{ad} = \C_{ad}$ is a  characteristic subcategory of $\C$. In addition $\D$ is the centralizer of $\I_{pt}$ in $\C$, because $\I$ is non-degenerate. Thus $\D$ is a characteristic subcategory, and  since $\C$ is weakly anisotropic, then so is $\D$.
	
\medbreak
The proof is by induction on $\FPdim \C$. Since $\C$ is not integral, then $\C_{ad} \neq \C$.
Furthermore,  $\C_{ad}$ is characteristic, and thus it must be weakly anisotropic (see Remark \ref{ppties} (iii)). In addition $\C_{ad}$ is integral \cite[Proposition 8.27]{ENO}; hence  it is pointed, by Proposition \ref{wa-wgt-int}. Moreover, $\C_{ad} \neq \vect$ because $\C$ is not pointed.

\medbreak
Let $\B$ denote the M\" uger center of $\C_{ad}$. By Proposition \ref{center-st},  $\B \cong \vect$ or $\B \cong \svect$.
In the first case $\C_{ad}$ is non-degenerate. Then $\C \cong \C_{ad} \boxtimes (\C_{ad})'$ as braided fusion categories, and $(\C_{ad})'$ is characteristic. By induction, $(\C_{ad})'$ is equivalent to a tensor product of an Ising braided category and a pointed braided fusion category. Then so is $\C$, since $\C_{ad}$ is pointed. We may thus assume that $\B \cong \svect$.

\medbreak
In view of \cite[Proposition 2.6 (ii)]{ENO2}, $\C_{ad} \cong \B \boxtimes \C_0$, where $\C_0$ is a  non-degenerate pointed braided category.
Then $\C \cong \C_0 \boxtimes (\C_0)'$ as braided fusion categories.

By \cite[Corollary 3.26]{DGNOI} $\C_{ad} \subseteq (\C_{pt})'$. Thus, since $\C_0 \subseteq \C_{pt}$, we get that $\C_{ad} \subseteq (\C_0)'$. But then $\C_0 \subseteq \C_{ad} \subseteq (\C_0)'$, which implies that $\C_0 \cong \vect$, because $\C_0$ is non-degenerate.  Therefore $\C_{ad} \cong \svect$.

\medbreak The maximal integral fusion subcategory $\C_{int}$ of $\C$ is weakly anisotropic (Remark \ref{ppties} (iii)). By Proposition \ref{wa-wgt-int}, $\C_{int}$ is pointed and therefore $\C_{int} = \C_{pt}$.

\medbreak Suppose that $X \in \C$ is a non-invertible simple object. Then $X \otimes X^* \in \C_{ad} \cong \svect$, and thus $\FPdim X = \sqrt 2$. Therefore the dimensional grading group of $\C$ is of order $2$. That is, for all non-invertible objects $X, Y$ of $\C$, we have $X \otimes Y \in \C_{int} = \C_{pt}$. Hence $\C$ has generalized Tambara-Yamagami fusion rules; see \cite[Section 5]{gen-ty}.

\medbreak
By Proposition \ref{center-st}, $\C' \cong \vect$ or $\C' \cong \svect$. In the first case, $\C$ is non-degenerate. Then, by \cite[Theorem 5.5]{gen-ty},  $\C \cong \I \boxtimes \D$ as a braided fusion category, where $\I$ is an Ising braided category and $\D$ is a non-degenerate pointed braided category. Hence we are done in this case.

\medbreak
It remains to consider the possibility $\C' \cong \svect$. In this case the  subcategory $\E = \C' \vee \C_{ad}$ is characteristic. If $\E$ is non-degenerate, then we are done by induction. So we may assume that $\E \cap \E' \cong \svect$, by Lemma \ref{wa-symm}. Since $\E$ is pointed, then $\E \cong \svect \boxtimes \E_0$, where $\E_0$ is a non-degenerate pointed braided fusion category.

Notice that, by \cite[Lemma 5.4]{mueger-galois}, $\C' \cap \C_{ad} \cong \vect$. Hence, by \cite[Corollary 3.12]{DGNOI}, $\FPdim \E = \FPdim \C' \, \FPdim \C_{ad} = 4$. Therefore $\FPdim \E_0 = 2$. Then $\E_0$ is the unique nontrivial non-degenerate subcategory of $\E$ (the remaining proper subcategories are $\C'$ and $\C_{ad}$, which are both equivalent to $\svect$) and therefore it must be characteristic. This implies that $(\E_0)'$ is also characteristic and in addition $\C \cong \E_0 \boxtimes (\E_0)'$. Hence the statement follows in this case by induction. This finishes the proof of the theorem.
\end{proof}

\begin{proof}[Proof of Theorem \ref{core-wgt}]
Let $\C$ be a weakly group-theoretical braided fusion category.
Then the core $\C_0$ of $\C$ is a weakly anisotropic braided fusion category and in addition it is also weakly group-theoretical.
It follows from Theorems \ref{wa-wgt-int} and \ref{wa-wgt} that $\C_0 \cong \B \boxtimes \D$, where $\B \cong \vect$ (if $\C_0$ is integral) or $\B \cong \I$, with $\I$ an Ising braided category (if $\C_0$ is not integral).

Moreover, if $\C$ is integral, then  $\C_0$ is integral as well; see \cite[Proposition 4.1]{witt-wgt}.  Then $\C_0$ is necessarily pointed, by Theorem \ref{wa-wgt-int}.
\end{proof}

\begin{corollary} Let $\C$ be weakly group-theoretical braided fusion category such that $\FPdim \C$ is odd. Then the core of $\C$ is a non-degenerate pointed weakly anisotropic braided fusion category.
\end{corollary}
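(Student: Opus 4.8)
The plan is to verify the three asserted properties of the core $\C_0$ of $\C$ one at a time, reducing each to a result already at hand. Weak anisotropy is automatic, since the core of any braided fusion category is weakly anisotropic. Non-degeneracy is immediate from Corollary \ref{core-odd-wgt}: as $\C$ is weakly group-theoretical its Frobenius-Perron dimension is an integer, and by hypothesis it is odd, so the core is non-degenerate. The genuine content is therefore to show that $\C_0$ is \emph{pointed}.

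For this, the cleanest route I would take is first to prove that $\C$ itself is integral, and then to invoke the integral case of Theorem \ref{core-wgt}. To see integrality, I would use the canonical faithful grading of $\C$ by an elementary abelian $2$-group $E$ recalled in Subsection \ref{s-nilp}, namely $\C = \oplus_{x \in E}\C_x$ whose neutral component is the maximal integral fusion subcategory $\C_{int}$. Since this grading is faithful, $\C$ is an $E$-extension of $\C_{int}$, whence $\FPdim \C = |E|\,\FPdim \C_{int}$. As $|E|$ is a power of $2$ while $\FPdim \C$ is odd, this forces $|E| = 1$; thus $E$ is trivial and $\C = \C_{int}$ is integral. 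With $\C$ integral in hand, the integral case of Theorem \ref{core-wgt} states precisely that the core is a pointed weakly anisotropic braided fusion category, and combining this with the non-degeneracy established above yields the corollary.

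The only non-formal input is the integrality step, specifically that a faithful grading makes $\C$ an extension with $\FPdim \C = |E|\,\FPdim \C_{int}$; this is standard but lies slightly outside the statements quoted in the excerpt. If one prefers to avoid it, an alternative is to argue directly at the level of the core. By Theorem \ref{core-wgt} one has $\C_0 \cong \B \boxtimes \D$ with $\D$ pointed and $\B \cong \vect$ or $\B$ an Ising braided category, so it suffices to exclude the Ising case. Writing $\C_0 = \C_G^0$ for a maximal Tannakian subcategory $\E \cong \Rep G$, the equivalence $(\C_G^0)^G \cong \E'$ recalled in Subsection \ref{2.4} gives $\FPdim \E' = |G|\,\FPdim \C_0$; since $\C_0$ is weakly group-theoretical its dimension is an integer, so $\FPdim \E'$ is an integer, and as the dimension of a fusion subcategory of $\C$ it divides $\FPdim \C$, hence is odd. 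Then its divisor $\FPdim \C_0$ is odd as well, which rules out the $4$-dimensional Ising factor and forces $\C_0 = \D$ to be pointed.

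I expect the integrality reduction to be the main obstacle, in the narrow sense that it is the one step requiring a dimension-counting fact beyond the cited statements; everything else is a direct assembly of Corollary \ref{core-odd-wgt} and Theorem \ref{core-wgt}. The fallback argument of the previous paragraph shows that even without proving $\C$ integral outright, the oddness of $\FPdim \C$ propagates to the core and eliminates the Ising alternative, so the conclusion is robust to whichever of the two bookkeeping facts one chooses to invoke.
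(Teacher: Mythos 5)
Your proposal is correct and follows essentially the same route as the paper: the paper's proof simply notes that odd Frobenius--Perron dimension forces $\C$ to be integral, then applies the integral case of Theorem \ref{core-wgt} for pointedness and Corollary \ref{core-odd-wgt} for non-degeneracy. Your justification of the integrality step via the faithful grading by an elementary abelian $2$-group (which the paper leaves implicit) is valid, and your fallback argument at the level of the core is a sound, if unnecessary, alternative.
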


\begin{proof} Since $\FPdim \C$ is odd, then $\C$ is integral. By Theorem \ref{core-wgt}, the core $\C_0$ of $\C$ is a pointed weakly anisotropic braided fusion category. By Corollary \ref{core-odd-wgt}, $\C_0$ is non-degenerate. This proves the corollary.
\end{proof}

\begin{corollary}\label{e'-gt} Let $\C$ be an integral weakly group-theoretical braided fusion category and
Let $\E \subseteq \C$ be a maximal Tannakian subcategory. Then $\E'$ is group-theoretical.	\end{corollary}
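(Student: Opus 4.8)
The plan is to read off the structure of $\E'$ directly from Theorem \ref{core-wgt}, using the identification of $\E'$ as an equivariantization of the core recorded in Subsection \ref{2.4}, and then to apply the fact that equivariantizations of pointed fusion categories are group-theoretical. Since $\E \cong \Rep G$ is a \emph{maximal} Tannakian subcategory, the neutral component $\C_G^0$ of the de-equivariantization of $\C$ by $\E$ is by definition the core of $\C$ (and by \cite[Theorem 5.9]{DGNOI} it is independent of the chosen maximal Tannakian subcategory). As $\C$ is integral, Theorem \ref{core-wgt}---equivalently, Theorem \ref{wa-wgt-int} applied to the weakly anisotropic core---shows that $\C_G^0$ is a pointed braided fusion category.

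Next I would invoke the equivalence of braided fusion categories $(\C_G^0)^G \cong \E'$ from Subsection \ref{2.4}, in which $G$ acts on the core $\C_G^0$ by braided autoequivalences. Thus $\E'$ is equivalent to the $G$-equivariantization of a pointed fusion category. Granting that the equivariantization of a pointed fusion category under a finite group action is group-theoretical, it follows at once that $\E'$ is group-theoretical, which is the assertion. The first two steps are a direct assembly of cited results, once one checks that the given maximal $\E$ is the one used to form the core, which is automatic.

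The crux of the argument, and the step I expect to require the most care, is the last one: that $\D^G$ is group-theoretical whenever $\D$ is pointed. To justify this I would argue that a pointed category $\D \cong \vect_\Gamma^\omega$ is categorically Morita equivalent to itself, and that its equivariantization $\D^G$ is in turn categorically Morita equivalent to the pointed category $\vect_K^{\tilde\omega}$ attached to the extension $K$ of $G$ by $\Gamma$ determined by the action and its coherence data; for the trivial cocycle this is the familiar identification of $(\vect_\Gamma)^G$ with the group-theoretical category of $\Gamma \rtimes G$ relative to the subgroup $G$, and a Frobenius--Perron dimension count $\FPdim (\vect_\Gamma)^G = |\Gamma|\,|G| = |\Gamma \rtimes G|$ confirms its consistency. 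Equivalently, one may verify that $\Z(\D^G)$ admits a Tannakian Lagrangian subcategory. Either way one concludes that $\E' \cong \D^G$ is categorically Morita equivalent to a pointed category, i.e.\ group-theoretical; locating or recording a precise reference for this equivariantization fact is the only substantive point, everything else following formally from Theorem \ref{core-wgt} and the equivariantization description of $\E'$.
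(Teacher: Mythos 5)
Your proposal is correct and follows essentially the same route as the paper: identify $\E'$ with $(\C_G^0)^G$, note that the core $\C_G^0$ is pointed by Theorem \ref{core-wgt} since $\C$ is integral, and conclude via the fact that equivariantizations of pointed fusion categories are group-theoretical. The paper's proof is exactly this two-line argument (asserting the last fact without justification), so your additional care in verifying that $\D^G$ is Morita equivalent to a pointed category is the only difference, and it fills in a step the paper leaves implicit.
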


\begin{proof}
There is an equivalence of fusion categories $\E' \cong (\C_G^0)^G$. Since $\C_G^0$ is pointed, then $\E'$ is group-theoretical, as claimed.
\end{proof}

\begin{corollary} Let $\C$ be a weakly group-theoretical braided fusion category. Let $\E \cong \Rep G$ be a maximal Tannakian subcategory and let $\C_G$ be the corresponding $G$-crossed braided fusion category. Then the following hold:

\medbreak
(i) $\C_G$ is a 2-step nilpotent fusion category.

\medbreak
(ii) Suppose that $\C$ is integral and let $X$ be a simple object of $\C_G$. Then $|G| (\FPdim X)^2$ divides $\FPdim \C$. Moreover, if $\C$ is non-degenerate, then $|G|^2 (\FPdim X)^2$ divides $\FPdim \C$.
\end{corollary}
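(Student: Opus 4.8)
The plan is to read off both statements from the description of the core given by Theorem \ref{core-wgt}, together with two elementary facts: the adjoint subcategory of a graded category lies inside the neutral component, and it is integral whenever the ambient category has integer Frobenius--Perron dimension. Write $\C_G^0$ for the neutral component of the $G$-grading on $\C_G$; as $\E$ is maximal Tannakian, $\C_G^0$ is the core of $\C$, so by Theorem \ref{core-wgt} we have $\C_G^0 \cong \B \boxtimes \D$ with $\D$ pointed and $\B \cong \vect$ or $\B$ an Ising braided category. Also $\FPdim \C_G = \FPdim \C / |G|$ is an integer, since $\FPdim \E = |G|$ divides $\FPdim \C$.

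For (i) the goal is to prove the sharp statement that $(\C_G)_{ad}$ is pointed, whence $(\C_G)^{(2)} = ((\C_G)_{ad})_{ad} \cong \vect$ and $\C_G$ is $2$-step nilpotent. First I would note $(\C_G)_{ad} \subseteq \C_G^0$, because the adjoint subcategory lies in the neutral component of any grading. Next, since $\FPdim \C_G \in \Zz$, the adjoint subcategory is integral, so $(\C_G)_{ad} \subseteq (\C_G)_{int}$; intersecting gives $(\C_G)_{ad} \subseteq \C_G^0 \cap (\C_G)_{int} = (\C_G^0)_{int}$. Finally I compute the integral part of the core: since $\D$ is pointed, $(\C_G^0)_{int} \cong \B_{int} \boxtimes \D$, and $\B_{int}$ is either $\vect$ or the pointed $\Zz_2$-subcategory $\I_{int}$ of an Ising category; in both cases $(\C_G^0)_{int}$ is pointed. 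Hence $(\C_G)_{ad}$ is pointed, as required.

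For (ii) assume $\C$ integral, so that the core $\C_G^0$ is pointed by Theorem \ref{core-wgt}. Given a simple object $X$ of $\C_G$, the object $X \otimes X^*$ lies in $(\C_G)_{ad} \subseteq \C_G^0$ and is therefore a sum of invertibles; since $\dim \Hom(a, X \otimes X^*) = \dim \Hom(a \otimes X, X)$ equals $1$ if $a \otimes X \cong X$ and $0$ otherwise, one obtains $X \otimes X^* \cong \bigoplus_{a \in G[X]} a$, where $G[X] = \{ a \in G(\C_G): a \otimes X \cong X\}$. Taking dimensions yields the key identity $(\FPdim X)^2 = |G[X]|$. As every $a \in G[X]$ lies in the core, $G[X] \subseteq G(\C_G^0)$, so $(\FPdim X)^2 = |G[X]|$ divides $|G(\C_G^0)| = \FPdim \C_G^0$, the core being pointed. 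Because $\FPdim \C_G^0$ divides $\FPdim \C_G$ and $\FPdim \C = |G| \, \FPdim \C_G$, this gives $|G|(\FPdim X)^2 \mid \FPdim \C$. If moreover $\C$ is non-degenerate, the formula recalled in Subsection \ref{2.4} reads $\FPdim \C = |G|^2 \FPdim \C_G^0$, and combining with $(\FPdim X)^2 \mid \FPdim \C_G^0$ gives $|G|^2 (\FPdim X)^2 \mid \FPdim \C$.

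The main obstacle is obtaining $2$-step, rather than merely $3$-step, nilpotency in (i). The crude bound $(\C_G)^{(2)} \subseteq (\C_G^0)_{ad}$ is, in the Ising case, the nontrivial pointed category $\I_{ad}$, and would only yield $(\C_G)^{(3)} \cong \vect$. The essential point is that $(\C_G)_{ad}$ is contained not just in the core but in its integral part, and that the Ising factor contributes only its pointed $\Zz_2$-subcategory to $(\C_G)_{int}$; this is what pins $(\C_G)_{ad}$ down to a pointed category. After that, (ii) is comparatively routine: pointedness of the core makes $X \otimes X^*$ a sum of invertibles, and the divisibilities then follow from Lagrange's theorem applied to $G[X] \subseteq G(\C_G^0)$ together with the standard dimension formulas for de-equivariantizations.
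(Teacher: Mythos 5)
Your argument is correct. For part (i) it follows the paper's proof exactly: $(\C_G)_{ad}$ lies in the neutral component $\C_G^0$ of the grading, it is integral because $\FPdim \C_G \in \Zz$, and the integral part of the core $\B \boxtimes \D$ is pointed since an Ising factor contributes only its $\Zz_2$-pointed subcategory; you spell out this last step, which the paper leaves implicit in the phrase ``since $(\C_G)_{ad}$ is integral, then it is a pointed fusion category.'' For part (ii) you take a mildly different route: the paper quotes the general divisibility theorem for nilpotent fusion categories from Gelaki--Nikshych (that $(\FPdim X)^2$ divides $\FPdim (\C_G)_{ad}$), whereas you reprove the needed special case directly, using that $(\C_G)_{ad}$ is pointed to get $X \otimes X^* \cong \bigoplus_{a \in G[X]} a$, hence $(\FPdim X)^2 = |G[X]|$, and then Lagrange's theorem inside $G(\C_G^0)$. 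This buys a self-contained elementary argument at the cost of a slightly weaker intermediate statement ($(\FPdim X)^2 \mid \FPdim \C_G^0$ rather than $\mid \FPdim (\C_G)_{ad}$), which is still enough since the final divisibility chain $\FPdim \C = |G|\,|H|\,\FPdim \C_G^0$ (with $H = G$ in the non-degenerate case) is the same in both proofs.
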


\begin{proof} The category $\C_G$ is an $H$-extension of the core $\C_G^0$, for some subgroup $H$ of $G$. Therefore $(\C_G)_{ad} \subseteq \C_G^0$ and, since $(\C_G)_{ad}$ is integral, then it is a pointed fusion category. This implies part (i).

Let $X$ be a simple object of $\C_G$. Since, by (i), $\C_G$ is nilpotent, then $(\FPdim X)^2$ divides $\FPdim (\C_G)_{ad}$ \cite[Corollary 5.3]{gel-nik}. Hence $(\FPdim X)^2$ divides $\FPdim \C_G^0$. Let $H$ be the (normal) subgroup of $G$ such that $\C_G$ is an $H$-extension of $\C_G^0$. Then  $\FPdim \C_G^0 = \FPdim \C / |G| |H|$. Notice that, if $\C$ is non-degenerate, then $H = G$. This implies part (ii) and finishes the proof of the corollary.
\end{proof}

\section{Solvability of a weakly group-theoretical braided fusion category}\label{s-solv-wgt}
As a consequence of Theorem \ref{core-wgt}, we obtain the following theorem that provides some criteria for the solvability of a weakly group-theoretical braided fusion category.

\begin{theorem}\label{cor-sol} Let $\C$ be a weakly group-theoretical braided fusion category. Then the following are equivalent:

\medbreak
(i) $\C$ is solvable

\medbreak
(ii) $\E$ is solvable, for some maximal Tannakian subcategory of $\C$.

\medbreak
(iii) $\E$ is solvable, for every Tannakian subcategory of $\C$.
\end{theorem}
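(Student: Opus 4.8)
The plan is to establish the three equivalences through the cycle (i) $\Rightarrow$ (iii) $\Rightarrow$ (ii) $\Rightarrow$ (i), using Theorem \ref{core-wgt} to pin down the structure of the core and the closure properties of solvable fusion categories recalled in Subsection \ref{s-nilp} (stability under fusion subcategories, Deligne tensor products, extensions by solvable groups, and equivariantizations by solvable groups), together with the fact that $\Rep G$ is solvable if and only if $G$ is solvable.

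First I would dispose of the two easy implications. For (i) $\Rightarrow$ (iii): if $\C$ is solvable, then since solvability passes to fusion subcategories, every Tannakian subcategory $\E$ of $\C$ is solvable. For (iii) $\Rightarrow$ (ii): a maximal Tannakian subcategory always exists, since the set of Tannakian subcategories of $\C$ is finite and nonempty (it contains $\vect \cong \Rep 1$); such a subcategory is in particular a Tannakian subcategory, so (iii) yields its solvability.

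The substance lies in (ii) $\Rightarrow$ (i). The first observation is that the core of $\C$ is \emph{always} solvable: by Theorem \ref{core-wgt} the core is equivalent to $\B \boxtimes \D$, where $\D$ is pointed and $\B \cong \vect$ or $\B$ is an Ising braided category. A pointed braided fusion category has abelian, hence solvable, group of invertibles and is therefore solvable; an Ising braided category has $\FPdim = 4 = 2^2$ and is thus solvable (indeed cyclically nilpotent, being a $\Zz_2$-extension of $\vect_{\Zz_2}$); and solvability is preserved under $\boxtimes$. Now, given a maximal Tannakian subcategory $\E \cong \Rep G$ with $G$ solvable as in (ii), I would use the structure from Subsection \ref{2.4}: the de-equivariantization $\C_G$ is a $G$-crossed braided fusion category whose neutral component is the core $\C_G^0$, and whose $G$-grading is supported on a subgroup $H \leq G$, so that $\C_G$ is an $H$-extension of the (solvable) core. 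Since $H$ is solvable, $\C_G$ is solvable. Finally $\C \cong (\C_G)^G$ is the equivariantization of $\C_G$ by the solvable group $G$, and hence $\C$ is solvable.

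The hard part will be the solvability of the core, which is precisely what Theorem \ref{core-wgt} provides for us; without that structural description one would have no handle on $\C_G^0$. The remaining care is bookkeeping: one must check that both the extension group $H$ and the equivariantization group $G$ are solvable, and this is exactly where the hypothesis of (ii) that $\E \cong \Rep G$ is solvable enters (forcing $G$, and therefore $H \leq G$, to be solvable). The independence of the core from the choice of maximal Tannakian subcategory ensures that we may legitimately compute with the particular solvable $\E$ furnished by (ii).
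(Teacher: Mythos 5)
Your proposal is correct and follows essentially the same route as the paper: the easy implications via stability of solvability under fusion subcategories, and (ii) $\Rightarrow$ (i) by deducing solvability of the core from Theorem \ref{core-wgt} (pointed braided categories and Ising categories being solvable, and solvability being closed under $\boxtimes$), then climbing back up through the $H$-extension $\C_G$ and the $G$-equivariantization $\C \cong (\C_G)^G$ with $H \leq G$ solvable. The only difference is that you spell out explicitly why the core is solvable, which the paper leaves implicit.
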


\begin{proof} Since a fusion subcategory of a solvable fusion category is solvable, then (i) implies (ii) and (iii). It is clear that (iii) implies (ii). It will be enough to show that (ii) implies (i). Let $\E \cong \Rep G$ be a maximal Tannakian subcategory of $\C$ such that $\E$ is solvable. Then the group $G$ is solvable. It follows from Theorem \ref{core-wgt} that the core $\C_G^0$ of $\C$ is solvable. Then the de-equivariantization $\C_G$, being an $H$-extension of $\C_G^0$ for some subgroup $H$ of $G$, is solvable too. Since $\C \cong (\C _G)^G$, then $\C$ is solvable and we get (i).
\end{proof}

A natural number $d$ is said to \emph{force solvability} if any group of order $d$ is necessarily solvable. For instance, $d$ forces solvability if $d$ is not divisible by $4$ (by the Feit-Thompson Theorem), or if $d = p^aq^b$, $a, b \geq 0$ (by Burnside's Theorem).

\begin{corollary} Let $\C$ be a weakly group-theoretical non-degenerate braided fusion category. 	
Suppose that every natural number $d$ such that $d^2$ divides $\FPdim \C$ forces solvability. Then $\C$ is solvable. \end{corollary}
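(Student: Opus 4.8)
The plan is to reduce the solvability of $\C$ to the solvability of the finite group attached to a maximal Tannakian subcategory, and then to extract that group-theoretic solvability from the divisibility hypothesis. Concretely, I would invoke Theorem \ref{cor-sol}: since $\C$ is weakly group-theoretical, it suffices to produce one maximal Tannakian subcategory $\E \cong \Rep G$ that is solvable, and $\Rep G$ is solvable precisely when $G$ is a solvable group. So first I would fix a maximal Tannakian subcategory $\E \cong \Rep G$ of $\C$ (such an $\E$ always exists, since $\vect \cong \Rep 1$ is Tannakian).

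Next I would pin down the relevant divisibility. Because $\C$ is non-degenerate and contains the Tannakian subcategory $\E \cong \Rep G$, Subsection \ref{2.4} gives the identity $\FPdim \C = |G|^2 \, \FPdim \C_G^0$, where $\C_G^0$ is the core of $\C$. By Theorem \ref{core-wgt} the core is equivalent as a braided fusion category to $\B \boxtimes \D$ with $\D$ pointed and $\B \cong \vect$ or $\B$ an Ising braided category; in either case $\FPdim \C_G^0 = \FPdim \B \cdot \FPdim \D$ is a positive integer (equal to $|G(\D)|$ or $4\,|G(\D)|$). Consequently $\FPdim \C = |G|^2 \cdot (\text{positive integer})$, so that $|G|^2$ divides $\FPdim \C$.

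Finally I would feed this into the hypothesis. Taking $d = |G|$, the condition $d^2 \mid \FPdim \C$ is satisfied, so by assumption $d = |G|$ forces solvability; hence $G$ is solvable, and therefore $\E \cong \Rep G$ is a solvable maximal Tannakian subcategory. Theorem \ref{cor-sol} then yields that $\C$ is solvable.

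The only delicate point---and the step that genuinely uses the main theorem rather than soft arguments---is the integrality of the core: without knowing that $\FPdim \C_G^0$ is an integer one could not conclude $|G|^2 \mid \FPdim \C$, and it is precisely Theorem \ref{core-wgt} that guarantees the core has integer Frobenius-Perron dimension even when $\C$ itself is not integral.
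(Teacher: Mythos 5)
Your proof is correct and follows essentially the same route as the paper: both arguments deduce from non-degeneracy that $|G|^2=(\FPdim \E)^2$ divides $\FPdim \C$ for a Tannakian subcategory $\E\cong\Rep G$, apply the hypothesis with $d=|G|$ to get $G$ solvable, and conclude via Theorem \ref{cor-sol}. The only (harmless) differences are that the paper verifies solvability for \emph{every} Tannakian subcategory (criterion (iii)) rather than a maximal one, and that it leaves implicit the integrality of $\FPdim \C_G^0$, which you justify via Theorem \ref{core-wgt} but which already follows from the core being weakly group-theoretical.
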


\begin{proof}
Recall that for every Tannakian subcategory $\E \subseteq \C$, we have that $(\FPdim \E)^2$ divides $\FPdim \C$.
Suppose that $\E \subseteq \C$ is any Tannakian subcategory and let $G$ be a finite group such that $\E \cong \Rep G$. Then $|G|^2 = (\FPdim \E)^2$ divides $\FPdim \C$ and by assumption $G$ is solvable, whence so is $\E$. It follows from Theorem  \ref{cor-sol} that $\C$ is solvable, as claimed.
\end{proof}

\begin{proposition}\label{paqbd} Let $p$ and $q$ be prime numbers. Let $\C$ be a non-degenerate braided fusion category such that $\FPdim \C$ is an integer and suppose that for every simple object $X$ of $\C_{int}$, there exist non-negative integers $a, b$ such that $\FPdim X = p^aq^b$. Then $\C$ is solvable.
\end{proposition}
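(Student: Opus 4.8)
The plan is to reduce everything to Theorem \ref{cor-sol}, which says that a weakly group-theoretical braided fusion category is solvable as soon as all of its Tannakian subcategories are. Thus the argument breaks into two parts: showing that $\C$ is weakly group-theoretical, and showing that every Tannakian subcategory $\E$ of $\C$ is solvable. Granting both, the implication (iii) $\Rightarrow$ (i) of Theorem \ref{cor-sol} yields that $\C$ is solvable.

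I would dispatch the solvability of the Tannakian subcategories first, as this is where the hypothesis on $p,q$ enters and the matter is purely group-theoretic. If $\E \cong \Rep G$ is a Tannakian subcategory, then $\E$ is symmetric, hence integral, so $\E \subseteq \C_{int}$ and every simple object of $\E$ has Frobenius--Perron dimension of the form $p^aq^b$. Under $\E \cong \Rep G$ these simple objects are the irreducible representations of $G$, so every irreducible character degree of $G$ is a $\{p,q\}$-number. Now for each prime $r \notin \{p,q\}$ dividing $|G|$ no character degree is divisible by $r$, so by the It\^o--Michler theorem $G$ has a normal abelian Sylow $r$-subgroup $P_r$; as the $P_r$ are normal Sylow subgroups for distinct primes they commute and form an internal direct product $N = \prod_r P_r$, a normal abelian Hall $\{p,q\}'$-subgroup. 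Then $G/N$ has order $p^\alpha q^\beta$ and is solvable by Burnside's theorem, while $N$ is abelian, so $G$, and hence $\E$, is solvable.

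The remaining and harder task is to prove that $\C$ is weakly group-theoretical, which for a braided fusion category of integer Frobenius--Perron dimension is not automatic; this is the step I expect to be the main obstacle. The point is to leverage the hypothesis on $\C_{int}$, since $\FPdim\C$ itself may be divisible by primes other than $p$ and $q$. First, $\C$ admits a faithful grading by an elementary abelian $2$-group $E$ with neutral component $\C_{int}$, so $\C$ is an $E$-extension of $\C_{int}$; since weak group-theoreticity is preserved under extensions it suffices to treat $\C_{int}$, an integral braided fusion category with $\cd(\C_{int}) \subseteq \{p^aq^b\}$. I would then induct on $\FPdim\C_{int}$. If $\C_{int}$ is pointed it is group-theoretical. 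If $(\C_{int})_{ad} \subsetneq \C_{int}$, then $\C_{int}$ is a group extension of the smaller category $(\C_{int})_{ad}$, to which the induction applies. The genuinely delicate case is $(\C_{int})_{ad} = \C_{int}$ with $\C_{int}$ non-pointed: here Theorem \ref{wgt-tann} is unavailable, since it presupposes weak group-theoreticity, so the reduction must be produced unconditionally. When the M\"uger center of $\C_{int}$ contains a nontrivial Tannakian subcategory I would de-equivariantize by it, obtaining a braided category of smaller dimension whose simple objects again have $\{p,q\}$-number dimensions (by the equivariantization formula \eqref{dim-equiv} these downstairs dimensions divide those upstairs), conclude weak group-theoreticity by induction, and transport it back through the equivariantization; when the M\"uger center is trivial or $\svect$ I would instead extract a proper normal fusion subcategory from the categorical Burnside theorem applied to a simple object of prime-power dimension, splitting off $\svect$ via the decomposition of slightly degenerate categories of \cite{ENO2} when needed. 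Making this final reduction fully unconditional --- in particular ruling out or handling a hypothetical non-pointed, adjoint-closed, non-degenerate $\C_{int}$ all of whose non-invertible simple objects have dimension divisible by both $p$ and $q$ --- is the crux of the proof.
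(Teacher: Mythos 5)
Your first half --- the solvability of every Tannakian subcategory via It\^o--Michler plus Burnside --- is correct and is exactly the paper's argument, as is the overall strategy of combining this with Theorem \ref{cor-sol} once weak group-theoreticity is known. The problem is the second half: you explicitly leave unresolved ``the crux,'' namely handling a non-pointed, adjoint-stable category in which every non-invertible simple object could a priori have dimension divisible by both $p$ and $q$. That is a genuine gap, and it is precisely the part of the proposition that carries the mathematical content beyond the group theory. The paper closes it as follows. It inducts on $\FPdim\C$ keeping $\C$ \emph{non-degenerate} throughout (splitting off any proper non-degenerate subcategory via $\C\cong\D\boxtimes\D'$), and reduces to producing a nontrivial Tannakian subcategory, since de-equivariantizing by it strictly drops the dimension of the non-degenerate core. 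If $\C_{ad}\neq\C$, then by \cite[Lemma 7.1]{witt-wgt} one may assume $\C_{ad}$ is slightly degenerate with $G[X]=\1$ for every simple $X$; a Burnside-type count in $X\otimes X^*$ shows that if every non-invertible simple of $\C_{ad}$ had dimension divisible by $pq$ then $pq$ would divide $|G[X]|=1$, a contradiction, so either some simple has odd prime power dimension (then \cite[Proposition 7.4]{ENO2} yields a Tannakian subcategory) or all dimensions are powers of $2$ (then \cite[Theorem 7.2]{witt-wgt} gives solvability of $\C_{ad}$ directly). If $\C=\C_{ad}$, non-degeneracy gives $\C_{pt}=(\C_{ad})'=\vect$, the same counting forces a simple object of prime power dimension, and \cite[Corollary 7.2]{ENO2} produces a nontrivial symmetric subcategory which cannot be $\svect$ since $\C_{pt}$ is trivial.

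A secondary concern: your opening move of passing to $\C_{int}$ through the elementary abelian $2$-grading, while harmless for weak group-theoreticity, throws away the non-degeneracy of $\C$, and every tool used above ($\C_{pt}=(\C_{ad})'$, the $\D\boxtimes\D'$ splitting, the dimension count of the core after de-equivariantization) depends on it. The paper instead exploits $\C_{ad}\subseteq\C_{int}$ to import the dimension hypothesis into the adjoint subcategory without leaving the non-degenerate setting. So the route you chose makes the remaining case analysis harder rather than easier, and as written the proof is incomplete.
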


\begin{proof} Suppose that $G$ is a finite group such that the degree of every irreducible representation of $G$ is of the form $p^aq^b$, for some non-negative integers $a, b \geq 0$. It follows from the Ito-Michler's Theorem  \cite[Theorem 5.4]{michler} that if $r \neq p, q$ is a prime divisor of the order of $G$, then the $r$-Sylow subgroup of $G$ is normal and abelian. Hence $G$ isomorphic to a semidirect product $G_1 \rtimes G_2$, where $G_1$ is an abelian group of order relatively prime to $pq$ and $G_2$ is a group of order $p^nq^m$, $n, m \geq 0$. By Burnside's Theorem, $G_2$ is solvable and therefore so is $G$. In view of the assumptions on $\C$, this shows that every Tannakian subcategory $\E \subseteq \C$ is solvable. Thus, by Theorem \ref{cor-sol}, it will be enough to show that $\C$ is weakly group-theoretical.
	
\medbreak
The proof is by induction on $\FPdim \C$. We may assume that $\C$ is not nilpotent (and in particular not pointed). We may also assume that $\C$ contains no proper non-degenerate fusion subcategory $\D$; otherwise, $\C \cong \D \boxtimes \D'$ and by induction, $\D$ and $\D'$ are weakly group-theoretical, whence so is $\C$.
	
Observe that if $\E \cong \Rep G$  is a Tannakian subcategory of $\C$, then the de-equivariantization $\C_G$ has integer Frobenius-Perron dimension and for every simple object $Y$ of $(\C_G)_{int}$ we also have  $\FPdim Y = p^rq^s$, for some non-negative integers $r, s \geq 0$ (see Subsection \ref{ss-equiv}). If $\E$ is not trivial, then $\FPdim \C_G^0 < \FPdim \C$ and since $\C_G^0$ is non-degenerate, then it is weakly group-theoretical by induction, and hence so is $\C$.
Therefore it will be enough to show that $\C$ contains a nontrivial Tannakian subcategory.
	
\medbreak Suppose first that $\C_{ad} \neq \C$. We may assume that $\C_{ad}$ contains no nontrivial non-degenerate or Tannakian subcategory. By \cite[Lemma 7.1]{witt-wgt}, we conclude that $\C_{ad}$ is
slightly degenerate and $G[X] = \1$, for all simple object  $X \in \C_{ad}$.
	
In addition, $\C_{ad}$ is integral. Therefore, for every simple object $X$ of $\C_{ad}$, we have $\FPdim X = p^nq^m$, for some $n, m \geq 0$. Moreover, we may assume that $\C_{ad}$ is not pointed, since otherwise $\C$ is nilpotent and we are done.
	
If $\C_{ad}$ has a simple object of odd prime power dimension then, since it is integral and slightly degenerate, it contains a
nontrivial Tannakian subcategory by \cite[Proposition 7.4]{ENO2}. Hence we may assume that this is not the case.
	
If $\FPdim X$ is divisible by $pq$ for all non-invertible simple object $X \in \C_{ad}$, then $pq$ divides the order of the group $G[X]$ for all such simple objects, which is a contradiction. Then this possibility is discarded.
	
It remains to consider the case where $\FPdim X = 2^m$, $m \geq 0$, for every
simple object $X$ of $\C_{ad}$. In this case, \cite[Theorem 7.2]{witt-wgt} implies that $\C_{ad}$ is solvable and therefore $\C$, being a group extension of $\C_{ad}$ is weakly group-theoretical.
	
\medbreak Suppose next that $\C = \C_{ad}$. In particular $\C$ is integral and the Frobenius-Perron dimensions of simple objects of $\C$ are of the form $p^nq^m$, $n, m \geq 0$.
Since $\C$ is non-degenerate, then $\C_{pt} = (\C_{ad})'$ is the trivial fusion subcategory. It follows that $\C$ must contain a simple object of positive prime power dimension. Since $\C$ is integral, this implies that $\C$ contains a nontrivial symmetric subcategory $\E$, by  \cite[Corollary 7.2]{ENO2}. Since $\C$ does not contain nontrivial Tannakian subcategories, we must have $\E \cong \svect$. But then $\E \subseteq \C_{pt}$, which is a contradiction. The contradiction shows that in this case $\C$ must contain a nontrivial Tannakian subcategory, as claimed.
This finishes the proof of the proposition.
\end{proof}

Suppose that $\C$ is a non-degenerate braided fusion category of Frobenius-Perron dimension $p^aq^bd$, where $p$ and $q$ are prime numbers and $d$ is a square-free natural number. Since $\C$ is non-degenerate then, for every simple object $X$ of $\C$, the $(\FPdim X)^2$ divides $\FPdim \C$ \cite[Theorem 2.11]{ENO2}. Therefore for every simple object $X$ of $\C_{int}$, $\FPdim X = p^nq^m$, for some $n, m \geq 0$. From Proposition \ref{paqbd}, we obtain the following corollary, that strengthness the statement in \cite[Theorem 7.4]{witt-wgt}.

\begin{corollary}\label{paqbd-solv} Let $p$ and $q$ be prime numbers and let $d$ be a square-free natural number. Let $\C$ be a non-degenerate braided fusion category such that $\FPdim \C = p^aq^bd$, $a, b \geq 0$. Then $\C$ is solvable. \qed
\end{corollary}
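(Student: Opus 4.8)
The plan is to reduce the statement directly to Proposition \ref{paqbd} by verifying its hypotheses, so that the corollary follows with no further work. First I would note that $\FPdim \C = p^aq^bd$ is a natural number, whence $\C$ has integer Frobenius-Perron dimension, as required by Proposition \ref{paqbd}. Since $\C$ is non-degenerate, the divisibility result \cite[Theorem 2.11]{ENO2} guarantees that $(\FPdim X)^2$ divides $\FPdim \C$ for every simple object $X$ of $\C$.

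The only genuine content lies in controlling the Frobenius-Perron dimensions of the simple objects of the integral part $\C_{int}$. For such an object $X$, the quantity $\FPdim X$ is a positive integer satisfying $(\FPdim X)^2 \mid p^aq^bd$, and I would argue that the square-freeness of $d$ forces $\FPdim X = p^nq^m$ for some $n, m \geq 0$. Indeed, if some prime $r \notin \{p, q\}$ divided $\FPdim X$, then $r^2$ would divide $(\FPdim X)^2$ and hence $p^aq^bd$; but $r \nmid p^aq^b$ and $d$ is square-free, so $r^2 \nmid p^aq^bd$, a contradiction. Thus $\cd(\C_{int})$ consists entirely of numbers of the form $p^nq^m$.

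With this arithmetic in hand, the hypotheses of Proposition \ref{paqbd} hold verbatim for $\C$, and invoking that proposition immediately yields the solvability of $\C$. The argument is entirely routine, and there is no real obstacle: the single substantive point is the elementary observation that square-freeness of $d$ leaves no room for a third prime to appear with multiplicity two in $(\FPdim X)^2$, which is precisely what confines the dimensions of the simple objects of $\C_{int}$ to powers of $p$ and $q$ and thereby activates Proposition \ref{paqbd}.
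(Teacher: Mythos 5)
Your proposal is correct and follows exactly the paper's argument: non-degeneracy gives $(\FPdim X)^2 \mid \FPdim \C$ via \cite[Theorem 2.11]{ENO2}, square-freeness of $d$ then confines $\cd(\C_{int})$ to numbers of the form $p^nq^m$, and Proposition \ref{paqbd} applies. No differences worth noting.
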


\begin{corollary} Let $\C$ be an integral non-degenerate braided fusion category of dimension $p^aq^bd$, $a, b \geq 0$. Then the core of $\C$ is a pointed weakly anisotropic braided fusion category. \qed
\end{corollary}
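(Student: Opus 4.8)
The plan is to deduce this statement directly from Theorem \ref{core-wgt}, once we know that $\C$ is weakly group-theoretical. Recall that the ``in particular'' clause of Theorem \ref{core-wgt} asserts that the core of an \emph{integral} weakly group-theoretical braided fusion category is pointed and weakly anisotropic. Hence the entire task reduces to verifying that $\C$ is integral (which is given) and weakly group-theoretical (which is not yet apparent from the hypotheses as stated).

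First I would observe that $\C$ meets the assumptions of Corollary \ref{paqbd-solv}: it is a non-degenerate braided fusion category whose Frobenius-Perron dimension equals $p^aq^bd$ with $p, q$ prime and $d$ square-free. Therefore $\C$ is solvable. Since solvability is by definition a strengthening of weak group-theoreticity---a solvable fusion category is in particular weakly group-theoretical, being categorically Morita equivalent to a cyclically nilpotent, hence nilpotent, fusion category---it follows that $\C$ is weakly group-theoretical.

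Second, combining integrality with the weakly group-theoretical property just established, I would apply Theorem \ref{core-wgt} to conclude that the core of $\C$ is a pointed weakly anisotropic braided fusion category. (The weakly anisotropic property is in any case automatic, since the core of any braided fusion category is weakly anisotropic by construction; see Subsection \ref{ss-core}.) This completes the argument.

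There is essentially no hard step here: the statement is a formal consequence of Corollary \ref{paqbd-solv} and Theorem \ref{core-wgt}. The only point requiring any care is the bookkeeping of hypotheses, namely that the dimensional constraint yields solvability, that solvability yields the weakly group-theoretical property needed to invoke Theorem \ref{core-wgt}, and that it is the \emph{integrality} of $\C$ (rather than mere weak group-theoreticity) that forces the core to be genuinely pointed, ruling out the Ising-by-pointed alternative permitted in the general case.
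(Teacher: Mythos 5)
Your proposal is correct and follows exactly the route the paper intends (the corollary is stated with \qed precisely because it is the immediate concatenation of Corollary \ref{paqbd-solv}, which gives solvability and hence the weakly group-theoretical property, with the integral case of Theorem \ref{core-wgt}). Your bookkeeping of the hypotheses, including the implicit square-freeness of $d$ inherited from the surrounding discussion, is accurate.
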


Let $\C$ and $\D$ be spherical fusion categories. Recall from \cite[Section 6]{fr-solvable}, that $\C$ and $\D$ are \emph{$S$-equivalent} if there exists a bijection $f: \Irr(\Z(\C)) \to \Irr(\Z(\D))$, called an $S$-equivalence, such that $f(\1) = \1$ and $S_{f(X), f(Y)} = S_{X, Y}$, for all $X, Y \in \Irr(\Z(\C))$. Here, $\Z(\C)$ denotes the Drinfeld center of $\C$.

The following theorem extends a result in \cite[Theorem 6.5]{fr-solvable}.

\begin{theorem}\label{fr-wgt} Let $\C$ be a weakly group-theoretical braided fusion category. Suppose that $\C$ is $S$-equivalent to a solvable fusion category $\D$. Then $\C$ is solvable.
\end{theorem}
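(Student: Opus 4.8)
The plan is to reduce, via Theorem \ref{cor-sol}, to a statement about Tannakian subcategories, and then transport solvability across the $S$-equivalence at the level of the Drinfeld centers. Since $\C$ is weakly group-theoretical, Theorem \ref{cor-sol} tells us that $\C$ is solvable if and only if every Tannakian subcategory $\E \cong \Rep G$ of $\C$ is solvable, equivalently if and only if $G$ is solvable for each such $\E$. So it suffices to fix a Tannakian subcategory $\E \cong \Rep G$ of $\C$ and prove that $G$ is solvable. Because $\C$ is braided, there is a canonical braided embedding $\C \to \Z(\C)$, under which $\E$ is carried to a symmetric (indeed Tannakian) fusion subcategory of the modular category $\Z(\C)$, still equivalent to $\Rep G$. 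Thus I may work entirely inside the centers.

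The second step is to make precise the fact that an $S$-equivalence $f:\Irr(\Z(\C)) \to \Irr(\Z(\D))$ is a rigid enough piece of data. Since $f$ fixes $\1$ and preserves every entry $S_{X,Y}$, the Verlinde formula shows that $f$ preserves all Frobenius-Perron dimensions (read off from the first row of $S$) and all fusion multiplicities; hence $f$ is an isomorphism of based fusion rings $K_0(\Z(\C)) \cong K_0(\Z(\D))$. Under such an isomorphism fusion subcategories correspond to fusion subrings, so $f$ carries fusion subcategories to fusion subcategories. Moreover, in a non-degenerate braided fusion category a fusion subcategory $\B$ is symmetric precisely when $S_{X,Y} = \FPdim X \, \FPdim Y$ for all $X, Y \in \Irr(\B)$; as both sides of this equality are preserved by $f$, symmetric subcategories are sent to symmetric subcategories. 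Applying this to the image of $\E$ in $\Z(\C)$, I obtain a symmetric fusion subcategory $f(\E) \subseteq \Z(\D)$ with $K_0(f(\E)) \cong K_0(\Rep G)$. By Deligne's theorem $f(\E) \cong \Rep(H,u)$ for some finite group $H$ and central involution $u$, and at the level of Grothendieck rings $K_0(f(\E)) \cong K_0(\Rep H)$.

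The third step extracts solvability. As $\D$ is solvable, its Drinfeld center $\Z(\D)$ is solvable by \cite[Proposition 4.5]{ENO2}, and solvability passes to the fusion subcategory $f(\E)$; hence $f(\E) \cong \Rep(H,u)$ is solvable. The maximal Tannakian subcategory $\Rep(H/(u))$ of $f(\E)$ is then solvable, so $H/(u)$ is solvable, and since $(u) \cong \Zz_2$ the group $H$ is solvable as well. It remains to descend from $H$ to $G$ through the ring isomorphism $K_0(\Rep G) \cong K_0(\Rep H)$, i.e.\ through the coincidence of character rings. Once $G$ is shown to be solvable, every Tannakian subcategory of $\C$ is solvable and Theorem \ref{cor-sol} yields that $\C$ is solvable.

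The main obstacle is exactly this last descent: one must know that \emph{solvability of a finite group is an invariant of the fusion ring of its representation category}, equivalently that two groups with isomorphic character rings are solvable or not simultaneously. This is the crux of the argument and is where the non-trivial input lies; it is a classical fact of character theory (the derived series, and hence solvability, being recoverable from the character table), and it is precisely the mechanism exploited in \cite{fr-solvable}. The remaining points, namely that $S$-equivalence induces a based fusion-ring isomorphism of centers and that it respects symmetric subcategories through the $S$-matrix centralizer criterion, are routine verifications, and the passage from $\C$ to $\Z(\C)$ and back is handled by the braided embedding together with the closure properties of solvable categories recalled in Subsection \ref{s-nilp}.
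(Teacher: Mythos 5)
Your proposal is correct and follows essentially the same route as the paper's proof: pass to the Drinfeld centers, use the $S$-equivalence to carry a Tannakian subcategory $\E \cong \Rep G$ of $\Z(\C)$ to a symmetric subcategory of the solvable category $\Z(\D)$, invoke the fact that solvability of a finite group is determined by its character ring, and conclude via Theorem \ref{cor-sol}. The only cosmetic differences are that you verify by hand (via the $S$-matrix centralizer criterion) what the paper cites from \cite[Lemma 6.2]{fr-solvable}, and that you apply Theorem \ref{cor-sol} to $\C$ directly rather than to $\Z(\C)$; both are fine.
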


Recall that every weakly group-theoretical fusion category $\C$ has integer Frobenius-Perron dimension and it is therefore spherical. Moreover, there is a canonical positive spherical structure on $\C$ (that is, the unique spherical structure with respect to which quantum dimensions coincide with Frobenius-Perron dimensions), see \cite[Propositions 8.23 and 8.24]{ENO}.

\begin{proof} Note that the Drinfeld centers $\Z(\C)$ and $\Z(\D)$ are both weakly group-theoretical non-degenerate braided fusion categories.
	
Let $f: \Irr(\Z(\C)) \to \Irr(\Z(\D))$ be an $S$-equivalence. Then $f$ is a Grothen\-dieck equivalence, that is, it preserves fusion rules. In addition, $f$ preserves Frobenius-Perron dimensions and centralizers; see \cite[Lemma 6.2]{fr-solvable}.  Hence, $f$ induces an inclusion preserving bijection, that we shall still denote by $f$, between the lattices of fusion subcategories of $\Z(\C)$ and $\Z(\D)$. Suppose that $\D$ is solvable, thus $\Z(\D)$ is solvable as well.
	
Let $\E \subseteq \Z(\C)$ be a Tannakian subcategory and let $G$ be a finite group such that $\E \cong \Rep G$ as braided fusion categories. Then $f(\E) \subseteq \Z(\D)$ is a symmetric subcategory, and therefore there is an equivalence of fusion categories  $f(\E) \cong \Rep L$, for some finite group $L$. Since $\D$ is solvable, then $f(\E)$ is solvable and thus the group $L$ is solvable. This implies that the group $G$ is solvable, because the categories $\E \cong \Rep G$ and $f(\E) \cong \Rep L$ have the same fusion rules (hence $G$ and $L$ have the same character table). Therefore $\E$ is solvable. In view of Theorem \ref{cor-sol}, this implies that $\Z(\C)$ is solvable, and therefore so is $\C$. This finishes the proof of the theorem.
\end{proof}

\section{Integral modular categories with simple objects of Frobenius-Perron dimension at most $2$}\label{s-cd12}	

In this section we give a proof of Theorem \ref{cd12}. Along this section $\C$ will be an integral non-degenerate braided fusion category such that $\FPdim X \leq 2$, for every simple object $X$ of $\C$, in other words, we have  $\cd (\C) \subseteq  \{ 1, 2 \}$.

\medbreak
In view of \cite{cd2}, the assumption implies that $\C$ is solvable. By Theorem \ref{core-wgt}, we obtain:

\begin{corollary} The core of $\C$ is a pointed non-degenerate braided fusion category. \qed
\end{corollary}

\medbreak
\emph{We shall assume in what follows that $\C$ is not group-theoretical. In particular, $\C$ is not pointed, that is, $\cd (\C) =  \{ 1, 2 \}$.}

\medbreak
Let $\E \subseteq \C$ be a maximal Tannakian subcategory. Let also $G$ be a finite group such that $\Rep G \cong \E$ and let $\D = \C_G$ denote the de-equivariantization. Recall from Subsection \ref{ss-core} that the de-equivariantization $(\E')_G$ coincides with the core of $\C$ and $\D$ is a $G$-extension of $\C_G^0 \cong (\E')_G$. In addition, we have an equivalence of braided fusion categories
\begin{equation}\label{zdeu}\C \boxtimes (\C_G^0)^{rev} \cong \Z(\D).\end{equation}

Therefore we also have
\begin{equation}\cd (\Z(\D)) = \{1, 2\}.\end{equation}

\begin{lemma}\label{gama} The adjoint subcategory $\D_{ad}$ is pointed and $G(\D_{ad})$ is an abelian $2$-group.
\end{lemma}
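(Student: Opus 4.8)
The plan is to exploit that $\D=\C_G$ is a $G$-extension of its neutral component $\C_G^0$, together with the structure of $\C_G^0$ already obtained. Recall that $\C_G^0$ is the core of $\C$ and, by the corollary above, it is a pointed non-degenerate braided fusion category. Since for any grading the adjoint subcategory is contained in the neutral component, the $G$-grading $\D=\oplus_{g\in G}\D_g$ with $\D_e=\C_G^0$ yields $\D_{ad}\subseteq \C_G^0$. As $\C_G^0$ is pointed, this immediately gives that $\D_{ad}$ is pointed, settling the first assertion.

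It remains to control $G(\D_{ad})$. First I would observe that $\D_{ad}$ is a fusion subcategory of the braided fusion category $\C_G^0$, hence is itself braided; therefore its group of invertible objects $G(\D_{ad})$ is abelian (the braiding forces $g\otimes h\cong h\otimes g$ for invertibles). Next I would pin down the Frobenius-Perron dimensions of simple objects of $\D$. Using the equivalence $\Z(\D)\cong \C\boxtimes(\C_G^0)^{rev}$ together with $\cd(\Z(\D))=\{1,2\}$, and the fact that the forgetful functor $\Z(\D)\to\D$ is dominant and preserves Frobenius-Perron dimensions, every simple object of $\D$ is a constituent of the image of a simple object of $\Z(\D)$, so that $\cd(\D)\subseteq\{1,2\}$.

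With this in hand, let $X$ be a simple object of $\D$ with $\FPdim X=2$. Then $X\otimes X^*\in\D_{ad}$ has Frobenius-Perron dimension $4$ and, since $\D_{ad}$ is pointed, decomposes as a sum of invertible objects. For an invertible $g$ one has $\dim\Hom(X\otimes X^*,g)=\dim\Hom(X,g\otimes X)\in\{0,1\}$, the value $1$ occurring exactly when $g\in G[X]$; hence $X\otimes X^*\cong\bigoplus_{g\in G[X]}g$ and $|G[X]|=4$. Since $\D_{ad}$ is generated by the objects $X\otimes X^*$ with $X$ simple, and these are trivial when $X$ is invertible, the group $G(\D_{ad})$ is generated by the subgroups $G[X]$ of order $4$. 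Every element of such a subgroup has $2$-power order, and an abelian group generated by elements of $2$-power order is a $2$-group; this yields the claim.

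The only delicate points I expect are the identification $\cd(\D)\subseteq\{1,2\}$ and the multiplicity-one computation of $X\otimes X^*$. Once $\D_{ad}$ is known to be pointed and braided, the remainder reduces to the elementary fact that an abelian group generated by elements of $2$-power order is a $2$-group, so the argument is essentially structural rather than computational.
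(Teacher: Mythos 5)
Your proposal is correct and follows essentially the same route as the paper: $\D_{ad}\subseteq\C_G^0$ is pointed and braided (hence $G(\D_{ad})$ abelian), every $2$-dimensional simple $S$ gives $S\otimes S^*\cong\bigoplus_{g\in G[S]}g$ with $|G[S]|=4$, and these subgroups generate $G(\D_{ad})$. The only cosmetic difference is that you justify $\cd(\D)\subseteq\{1,2\}$ via the forgetful functor from $\Z(\D)\cong\C\boxtimes(\C_G^0)^{\rev}$, whereas the paper reads it off directly from $\D=\C_G$ and the equivariantization dimension formula; both are valid.
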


\begin{proof} Since $\D$ is a graded extension of the pointed subcategory $\C_G^0$, then the adjoint subcategory $\D_{ad}$ is contained in $\C_G^0$. Therefore $\D_{ad}$ must be pointed with an abelian group of isomorphism classes of invertible objects, because $\C_G^0$ is a pointed braided fusion category.
	
Notice that, since $\D = \C_G$, then $\cd(\D) = \{1, 2\}$. On the other hand, since $\D_{ad}$ is pointed, then for every 2-dimensional simple object $S$ of $\D$ we have a decomposition $S \otimes S^* \cong \bigoplus_{g \in G[S]}g$, where $G[S] \subseteq G(\D)$ is the subgroup consisting of all those $g$ such that $g \otimes S \cong S$.
In particular $|G[S]| = 4$. In addition, the group  $G(\D_{ad})$ is generated by the subgroups $G[S]$, then the lemma follows.
\end{proof}

Let $U = U(\D)$ be the universal grading group of the category $\D$. Then $\Rep U$ is a Tannakian subcategory of $\Z(\D)$ and $\Z(\D)_U$ is a $U$-extension of the Drinfeld center $\Z(\D_{ad})$. See \cite{center-gdd}.

It follows from Lemma \ref{gama} that $\Z(\D_{ad})$ is a (non-degenerate) braided 2-category.

\begin{lemma}\label{uabel} The group $U$ is abelian.
\end{lemma}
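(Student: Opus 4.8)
The goal is to prove that $U$ is abelian, equivalently that the Tannakian subcategory $\Rep U \subseteq \Z(\D)$ is pointed. Since $\Rep U$ is a fusion subcategory of $\Z(\D)$ and $\cd(\Z(\D)) = \{1,2\}$, every irreducible representation of $U$ already has dimension at most $2$, so the entire content is to exclude two-dimensional irreducibles. The first structural input I would extract is the identification of the centralizer: applying the correspondence of Subsection \ref{2.4} to the Tannakian subcategory $\Rep U$ of the non-degenerate category $\Z(\D)$, together with $\Z(\D)_U^0 \cong \Z(\D_{ad})$, yields an equivalence of braided fusion categories $(\Rep U)' \cong \Z(\D_{ad})^U$, the $U$-equivariantization of $\Z(\D_{ad})$ under the canonical action of $U$ by braided autoequivalences. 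By Lemma \ref{gama} the category $\Z(\D_{ad})$ is pointed and non-degenerate, so $A = G(\Z(\D_{ad}))$ is a finite abelian $2$-group and the action of $U$ is an action on $A$ preserving its quadratic form. Since $\Rep U$ is symmetric and $\Rep U \subseteq (\Rep U)'$, it is exactly the M\"uger center of $(\Rep U)'$, and any two-dimensional irreducible representation of $U$ would appear as a two-dimensional simple object of $\Z(\D_{ad})^U$ lying over the unit of $\Z(\D_{ad})$.

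First I would bound the orbits. As $(\Rep U)'$ is a fusion subcategory of $\Z(\D)$ we have $\cd((\Rep U)') \subseteq \{1,2\}$, and every simple object of $\Z(\D_{ad})$ is an invertible $Y \in A$ with $\FPdim Y = 1$. Substituting this into the equivariantization dimension formula \eqref{dim-equiv} for $(\Rep U)' \cong \Z(\D_{ad})^U$ gives $[U:U_Y]\,\dim\pi \leq 2$ for every $U$-orbit $Y$ in $A$ and every irreducible $\alpha_Y$-projective representation $\pi$ of the inertia subgroup $U_Y$. In particular $[U:U_Y] \leq 2$ for all $Y$, so every $U$-orbit on $A$ has length at most $2$; equivalently each element of $U$ acts on $A$ as an involution or the identity, whence the image of $U$ in $\Aut(A)$ is an elementary abelian $2$-group. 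Thus $[U,U]$ acts trivially on $\Z(\D_{ad})$, and the formula simultaneously constrains the cocycles $\alpha_Y$ and the inertia subgroups through the requirement $\dim\pi \leq 2/[U:U_Y]$.

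The hard part will be to pass from this quotient information to the statement that $U$ is itself abelian. Let $N = \ker\big(U \to \Aut_{br}\Z(\D_{ad})\big)$, so that $N \supseteq [U,U]$ and $U/N$ is elementary abelian. A purely numerical argument cannot finish the job: a nonabelian kernel $N$ would contribute a copy of $\Rep N$ inside $(\Rep U)'$ whose two-dimensional irreducibles still fit inside the allowed range $\cd \subseteq \{1,2\}$, and indeed $N$ may legitimately be a nontrivial (central) abelian group. To close the gap I would bring in the braided data that the dimension count discards, using the description $\Rep U = \KER_\Phi$ for the forgetful functor $\Phi\colon \Z(\D) \to \D$, the non-degeneracy of $\Z(\D)$, and the genuine two-dimensional simple objects $S$ of $\D$ supplied by Lemma \ref{gama} (each with $|G[S]| = 4$): the cocycles $\alpha_Y$ attached to the $U$-fixed invertibles $Y \in A$ are governed by the quadratic form on $\Z(\D_{ad})$ and by how $U$ braids with $A$ inside the modular category $\Z(\D)$. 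I expect that a two-dimensional irreducible representation of $U$, combined through $\KER_\Phi$ with these two-dimensional objects, would force a simple object of $\Z(\D)$ of Frobenius--Perron dimension exceeding $2$, contradicting $\cd(\Z(\D)) = \{1,2\}$; controlling these cocycles precisely, so as to show that the extension $1 \to N \to U \to U/N \to 1$ is abelian, is the step I anticipate to be the main obstacle.
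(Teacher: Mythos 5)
Your proposal has a genuine gap, and you have in fact located it yourself: everything you derive from the equivariantization $(\Rep U)' \cong \Z(\D_{ad})^U$ applies the dimension formula \eqref{dim-equiv} only to the \emph{invertible} simple objects $Y$ of $\Z(\D_{ad})$, which yields $[U:U_Y]\dim\pi \leq 2$ and hence only the weak conclusions that $\cd(\Rep U)\subseteq\{1,2\}$ and that the image of $U$ in $\Aut(A)$ is elementary abelian. As you note, this is compatible with $U$ being any group all of whose irreducible representations have dimension at most $2$ (e.g.\ a dihedral or quaternion group), so the final and essential step is missing, and the closing paragraph about cocycles, quadratic forms and $\KER_\Phi$ is a plan rather than an argument.

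The idea you are missing is to run the same dimension count one level up, on $\Z(\D) \cong (\Z(\D)_U)^U$ rather than on $\Z(\D_{ad})^U$, and to invoke the standing hypothesis of the section that $\C$ is not group-theoretical --- a hypothesis your argument never uses. By \eqref{zdeu} the category $\Z(\D)$ is then not group-theoretical, so the de-equivariantization $\Z(\D)_U$ (which is integral with simple objects of dimension at most $2$, by the same formula read in reverse) cannot be pointed: it contains a simple object $Y$ with $\FPdim Y = 2$. For this $Y$ the formula $\FPdim S_{Y,\pi} = \dim\pi\,[U:U_Y]\,\FPdim Y \leq 2$ forces $U_Y = U$ and $\dim\pi = 1$ for \emph{every} irreducible $\alpha_Y$-projective representation $\pi$ of $U$. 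A finite group all of whose irreducible $\alpha$-projective representations are one-dimensional has commutative twisted group algebra $k_{\alpha}U$, hence is abelian (and $\alpha$ is cohomologically trivial). This single observation replaces the entire cocycle analysis you anticipated and completes the proof; the orbit bounds on $A$ that you established are not needed.
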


\begin{proof} Observe that the de-equivariantization $\Z(\D)_U$ also satisfies the condition $\cd (\C_G) = \{1, 2\}$. In fact, since $\Z(\D) \cong (\Z(\D)_U)^U$ then, it follows from the description of simple objects in an equivariantization in \cite[Corollary 2.13]{fr-equiv}, that $\Z(\D)_U$ is integral and  $\FPdim Y \leq 2$, for every simple object of $\Z(\D)_U$. Since, by assumption, $\C$ is not group-theoretical, then \eqref{zdeu} implies that $\Z(\D)$ is not group-theoretical neither, and thus $\Z(\D)_U$ is not pointed.
	
\medbreak
Furthermore, isomorphism classes of simple objects of $\Z(\D)$ are 	parameterized by
pairs $(Y, \pi)$, where $Y$ runs over the orbits of the action of $U$ on $\Irr(\Z(\D)_U)$ and $\pi$ runs over the equivalence classes of irreducible $\alpha_Y$-projective representations of the inertia subgroup $U_Y \subseteq U$, for certain 2-cocycle $\alpha_Y: U_Y \times U_Y \to k^{\times}$.
	
In addition, if $S_{Y, \pi}$ is a simple object corresponding to such pair $(Y, \pi)$, then $$\FPdim S_{Y, \pi} = \dim \pi \, [U: U_Y] \, \FPdim Y.$$
	
\medbreak
Hence  $\FPdim S_{Y, \pi} = 1$ or $2$, for all such pair $(Y, \pi)$.
Thus, if $\FPdim Y = 2$, then $U = U_Y$ and $\dim \pi = 1$, for every 	irreducible $\alpha_Y$-projective representations of $U = U_Y$. This implies that the cohomology class of $\alpha_Y$ is trivial and moreover, $U = U_Y$ is abelian, as claimed. \end{proof}
	
As a consequence of the previous lemma, we obtain:

\begin{corollary}\label{zd-ext} The category $\Z(\D)$ is a $U$-extension of $(\Rep U)'$.
\end{corollary}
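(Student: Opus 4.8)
The plan is to produce the required faithful $U$-grading of $\Z(\D)$ directly from the braiding, exploiting that $\Z(\D)$ is non-degenerate and that $\Rep U$ is pointed. First I would record that, since $U$ is abelian by Lemma \ref{uabel}, the Tannakian subcategory $\Rep U \subseteq \Z(\D)$ is pointed; writing $A = G(\Rep U)$ for its (abelian) group of invertible objects, we have $\Rep U = \langle A \rangle$ and $A \cong \widehat U$, so that $\widehat A \cong U$ canonically by Pontryagin duality.

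Next I would invoke the standard grading attached to a subgroup of invertible objects of a braided fusion category. For every simple object $X$ of $\Z(\D)$ and every $a \in A$ the object $a \otimes X$ is simple, so the double braiding $c_{X,a}\,c_{a,X}$ is a scalar $\lambda_X(a) \in k^\times$; the assignment $a \mapsto \lambda_X(a)$ is a character of $A$, and $X \mapsto \lambda_X$ defines a grading $\Z(\D) = \bigoplus_{\chi \in \widehat A}\Z(\D)_\chi$ by $\widehat A \cong U$. The neutral component consists of the objects that centralize every $a \in A$; since $\Rep U$ is generated by $A$, this neutral component is exactly $(\Rep U)'$.

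It then remains to check that this $U$-grading is faithful, and this is where I would use non-degeneracy. Let $H \subseteq \widehat A$ be the support. On the one hand, all homogeneous components of a grading indexed by its support have equal Frobenius--Perron dimension, so $\FPdim \Z(\D) = |H|\,\FPdim (\Rep U)'$. On the other hand, since $\Z(\D)$ is non-degenerate one has $\FPdim \Rep U \cdot \FPdim (\Rep U)' = \FPdim \Z(\D)$ with $\FPdim \Rep U = |U| = |\widehat A|$. Comparing the two gives $|H| = |\widehat A|$, hence $H = \widehat A$ and the grading is faithful; therefore $\Z(\D)$ is a $U$-extension of $(\Rep U)'$, as claimed. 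Alternatively, faithfulness follows at once from the observation that the annihilator of $H$ in $A$ equals $A \cap G(\Z(\D)')$, which is trivial because $\Z(\D)$ is non-degenerate.

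The main point to be careful about --- rather than a genuine obstacle --- is the bookkeeping identifying the grading group: the characters $\lambda_X$ naturally live in $\widehat A = \widehat{G(\Rep U)}$, and one must combine Pontryagin duality with $A \cong \widehat U$ to see that this group is canonically $U$, so that the grading is genuinely a $U$-grading and not merely a grading by some abstract group of order $|U|$. The abelianness of $U$ supplied by Lemma \ref{uabel} is essential here, since it is precisely what makes $\Rep U$ pointed and hence generated by the invertibles $A$ to which the double-braiding grading is attached.
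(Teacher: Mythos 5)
Your proof is correct, but it takes a genuinely different route from the paper. The paper works on the other side of the equivariantization: it starts from the $U$-grading of the crossed braided category $\Z(\D)_U$, observes that abelianness of $U$ (Lemma \ref{uabel}) makes the adjoint action of $U$ on the grading group trivial, and then cites M\"uger's result that the grading therefore descends to the equivariantization $\Z(\D) \cong (\Z(\D)_U)^U$ with neutral component $(\Z(\D)_U^0)^U = (\Rep U)'$. You instead build the grading inside $\Z(\D)$ itself: abelianness of $U$ makes $\Rep U$ pointed with group of invertibles $A \cong \widehat U$, the monodromies $c_{X,a}c_{a,X}$ with $a \in A$ give a $\widehat A \cong U$-grading whose neutral component is $\langle A\rangle' = (\Rep U)'$, and faithfulness follows from non-degeneracy of the center via $\FPdim(\Rep U)\cdot\FPdim(\Rep U)' = \FPdim \Z(\D)$ together with the equidimensionality of the components over the support. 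All the ingredients you use are standard and correctly applied (simplicity of $a\otimes X$, multiplicativity of the monodromy, M\"uger's double-centralizer theorem for non-degenerate categories), so the argument is complete. What each approach buys: the paper's proof is a one-line reduction to a known statement about equivariantizations of crossed braided categories, while yours is more self-contained, never invokes the crossed structure of $\Z(\D)_U$, and in fact proves the more general fact that a non-degenerate braided fusion category containing a pointed fusion subcategory with invertible group $A$ is a faithful $\widehat A$-extension of the centralizer of that subcategory; for the application in Theorem \ref{cd12} (nilpotency of $\Z(\D)$ given nilpotency of $(\Rep U)'$) either version suffices, and your care about identifying the grading group with $U$ via Pontryagin duality, while good bookkeeping, is not actually needed there.
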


\begin{proof}Recall that $\Z(\D)_U = \bigoplus_{u \in U}(\Z(\D)_U)_u$ is a $U$-graded extension of the category $\Z(\D)_U^0$.
Since by Lemma \ref{uabel} the group $U$ is abelian, then the adjoint action of $U$ on itself is trivial. Hence there is an induced $U$-grading on $\Z(\D)$, $\Z(\D) = \bigoplus_{u \in U}\Z(\D)_u$, such that $\Z(\D)_e = (\Z(\D)_U^0)^U = (\Rep U)'$; see \cite[Proposition 3.28]{mueger-crossed}.
\end{proof}

\begin{lemma}\label{repu-nil} The category $(\Rep U)'$ is nilpotent.
\end{lemma}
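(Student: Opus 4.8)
The plan is to exhibit $(\Rep U)'$ as an equivariantization of the braided fusion category $\Z(\D_{ad})$ and then to apply Lemma \ref{equiv-nil}. First I would invoke the correspondence recalled in Subsection \ref{2.4}: applied to the non-degenerate braided fusion category $\Z(\D)$ and its Tannakian subcategory $\Rep U$, it yields an equivalence of braided fusion categories $(\Rep U)' \cong (\Z(\D)_U^0)^U$, where the action of $U$ on the neutral component $\Z(\D)_U^0$ is the one induced by the $U$-crossed structure and is by braided autoequivalences. Since $\Z(\D)_U^0 \cong \Z(\D_{ad})$, this identifies $(\Rep U)'$ with the equivariantization $\Z(\D_{ad})^U$; alternatively, the same identification is implicit in Corollary \ref{zd-ext}.

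Next I would verify that the pair $\B = \Z(\D_{ad})$, $G = U$ meets every hypothesis of Lemma \ref{equiv-nil}. By Lemma \ref{gama} the category $\D_{ad}$ is pointed with $G(\D_{ad})$ an abelian $2$-group, so $\Z(\D_{ad})$ is integral of Frobenius-Perron dimension $|G(\D_{ad})|^2 = 2^n$; moreover $n \geq 1$, for if $\D_{ad} \cong \vect$ then $\D$ would be pointed and hence $\Z(\D)$ group-theoretical, contradicting the standing assumption via \eqref{zdeu}. By Lemma \ref{uabel} the group $U$ is abelian, and the induced action is by tensor autoequivalences. Finally, since $(\Rep U)' = \Z(\D_{ad})^U$ is a fusion subcategory of $\Z(\D)$, each of its simple objects has Frobenius-Perron dimension in $\cd(\Z(\D)) = \{1, 2\}$, so $\cd(\Z(\D_{ad})^U) \subseteq \{1, 2\}$.

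The one remaining hypothesis is the commutativity of the Grothendieck ring $K_0(\Z(\D_{ad})^U)$, and this comes for free: $(\Rep U)' = \Z(\D_{ad})^U$ is a braided fusion category, being a fusion subcategory of the braided category $\Z(\D)$, so the braiding supplies isomorphisms $X \otimes Y \cong Y \otimes X$ and hence $[X][Y] = [Y][X]$ in $K_0$. With all hypotheses in place, Lemma \ref{equiv-nil} gives that $\Z(\D_{ad})^U \cong (\Rep U)'$ is nilpotent, which is the assertion. I expect the only delicate point to be the clean identification of $(\Rep U)'$ with the equivariantization $\Z(\D_{ad})^U$ together with the verification that the relevant $U$-action is by tensor autoequivalences; once these are settled, the argument is a direct match of the data recorded in Lemmas \ref{gama}, \ref{uabel} and \ref{equiv-nil}.
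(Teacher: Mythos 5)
Your proposal is correct and follows essentially the same route as the paper: identify $(\Rep U)' \cong (\Z(\D)_U^0)^U \cong \Z(\D_{ad})^U$, note that $U$ is abelian (Lemma \ref{uabel}) and that $\Z(\D_{ad})$ is an integral fusion category of $2$-power dimension (Lemma \ref{gama}), and apply Lemma \ref{equiv-nil}. Your additional verifications — that $n \geq 1$, that $\cd((\Rep U)') \subseteq \{1,2\}$ because it is a fusion subcategory of $\Z(\D)$, and that $K_0$ is commutative because $(\Rep U)'$ is braided — are exactly the points the paper leaves implicit, and they are all handled correctly.
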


\begin{proof} There are equivalences of braided fusion categories $$(\Rep U)' \cong (\Z(\D)_U^0)^U \cong \Z(\D_{ad})^U.$$ By Lemma \ref{uabel}, $U$ is abelian. On the other hand, by Lemma \ref{gama}, $\D_{ad}$ is a $2$-category and therefore so is its Drinfeld center $\Z(\D_{ad})$. The lemma follows from Lemma \ref{equiv-nil}.
\end{proof}

\begin{proof}[Proof of Theorem \ref{cd12}] Let $\C$ be an integral non-degenerate braided fusion category such that $\FPdim X \leq 2$, for every simple object $X$ of $\C$.
	
Let $\Rep G \cong \E \subseteq \C$ be a maximal Tannakian subcategory and let $\D = \C_G$ the corresponding de-equivariantization. Corollary \ref{zd-ext} and Lemma \ref{repu-nil} imply that, unless $\C$ is group-theoretical, $\Z(\D)$ is nilpotent. Therefore so is $\C$, in view of the equivalence \eqref{zdeu}. Then $\C$ must be group-theoretical, by \cite[Corollary 9.4]{dgno-nilpotent}.  This finishes the proof of the theorem.	
\end{proof}
	
\bibliographystyle{amsalpha}

\end{document}